\theoremstyle{plain}
\newtheorem{theorem}{Theorem}[section]
\newtheorem{proposition}[theorem]{Proposition}
\newtheorem{lemma}[theorem]{Lemma}
\newtheorem{corollary}[theorem]{Corollary}
\theoremstyle{definition}
\newtheorem{definition}[theorem]{Definition}
\theoremstyle{remark}
\newtheorem{remark}[theorem]{Remark}
\numberwithin{equation}{section}
\newtheorem{conj}[theorem]{Conjecture}
\newtheorem{example}[theorem]{Example}
\newcommand{\be}{\begin{equation}}
\newcommand{\ee}{\end{equation}}
\newcommand{\bea}{\begin{eqnarray}}
\newcommand{\eea}{\end{eqnarray}}
\newcommand{\ben}{\begin{eqnarray*}}
	\newcommand{\een}{\end{eqnarray*}}
\newcommand{\bt}{\begin{split}}
	\newcommand{\et}{\end{split}}
\newcommand{\bet}{\begin{equation}}
\newcommand{\CC}{\mathbb{C}}
\newcommand{\DD}{\mathbb{D}}
\newcommand{\RR}{\mathbb{R}}
\newcommand{\beq}{\begin{equation*}}
\newcommand{\eeq}{\end{equation*}}
\newcommand{\bal}{\begin{aligned}}
\newcommand{\eal}{\end{aligned}}
\newcommand{\ddbar}{\partial \bar{\partial}}
\newcommand{\dbar}{\bar{\partial}}
\newcommand{\Id}{{\textup{Id}}}%
\newcommand{\supp}{{\textup{Supp}\,}}%
\newcommand{\loc}{{\textup{loc}}}%
\newcommand{\Ker}{{\textup{Ker}}}
\newcommand{\eps}{\varepsilon}%
\renewcommand{\leq}{\leqslant}%
\newcommand{\inner}[1]{\langle#1\rangle}
\begin{document}

\title[Optimal $L^2$ Extension]{An optimal $L^2$ extension for continuous $L^2$-optimal Hermitian metrics}

\author{Zhuo Liu}
\address{Mathematical Science Research Center,  Chongqing University of Technology,  No. 69, Hongguang
Avenue, Banan District, Chongqing 400054, China.}
\email{liuzhuo@cqut.edu.cn; liuzhuo@amss.ac.cn}

\date{}

\begin{abstract}
In this paper, we obtain an optimal $L^2$ extension theorem for  continuous $L^2$-optimal Hermitian metric on bounded planer domains. As applications, we affirmatively answer a question of Deng-Ning-Wang and a question of Inayama.
\end{abstract}

\keywords{$L^2$-optimal metrics,   Optimal $L^2$ extension, Pluriharmonicity, Griffiths positivity}

\subjclass[2020]{32U05, 32D15, 32L15, 32U35}

\maketitle

\section{Introduction}

Plurisubharmonicity and Griffiths/Nakano positivity play fundamental roles in several complex variables and complex geometry, yielding numerous important results. Plurisubharmonic functions, which need not be smooth, offer significant advantages in many problems. Similarly, there is growing interest in singular metrics with positivity properties for vector bundles.

Let $E$ be a holomorphic vector bundle over a complex manifold $X$. A \textit{singular Hermitian metric} $h$ on $E$ is a measurable map from $X$ to the space of non-negative Hermitian forms on fibers, satisfying $0 < \det h < +\infty$ almost everywhere. When $h$ is smooth, it is Griffiths semi-positive if and only if $\log|u|_{h^*}$ is plurisubharmonic for any local holomorphic section $u$ of the dual bundle. This characterization naturally leads to a definition of Griffiths positivity for singular Hermitian metrics (\cite{BP08, PT18, Raufi}), which has proven to be very useful. For line bundles, $h$ is Griffiths semi-positive precisely when the local weight $\varphi := -\log h$ is plurisubharmonic.

Recall that, for plurisubharmonic functions and Nakano semi-positive Hermitian holomorphic vector bundles on pseudoconvex domains or Stein manifolds, there are H\"ormander's $L^2$ estimates for the $\overline{\partial}$-equations \cite{Dem-82, Hor65} and Ohsawa-Takegoshi's $L^2$ extension theorem \cite{OT87}.
Since  \cite{OT87}, there has been considerable interests in refining the estimate in Ohsawa-Takegoshi's $L^2$ extension theorem. After the breakthrough of Guan-Zhou-Zhu \cite{GZZ11}, B{\l}ocki \cite{Blocki13} and Guan-Zhou \cite{GuanZhou15} successfully obtained the optimal $L^2$ extension theorem.

Recently, Deng-Ning-Wang-Zhang-Zhou \cite{DNW21, DNWZ22, DWZZ18} developed a converse $L^2$ theory, characterizing plurisubharmonicity and Griffiths/Nakano positivity via $L^2$-conditions for $\overline{\partial}$.

\begin{definition}[\cite{DNWZ22, Ina-AG, LXYZ24}]
Let $D \subset \CC^n$ be a domain, $E$ a trivial holomorphic vector bundle, and $h$ a singular Hermitian metric on $E$. The triple $(D, E, h)$ is \emph{$L^2$-optimal} if for any $\dbar$-closed $E$-valued $(n,1)$-form $f \in L^2_{(n,1)}(D, E; \loc)$, for any smooth strictly plurisubharmonic function $\phi$ and K\"ahler metric $\omega$ on $D$, the equation $\dbar u = f$ admits a solution  satisfying:
   \begin{equation}\label{eq:a1}
   \int_D |u|^2_{\omega,h} e^{-\phi} dV_\omega \leq \int_D \inner{B_{\omega,\phi}^{-1}f,f}_{\omega,h} e^{-\phi} dV_{\omega},
  \end{equation}
provided the right-hand side is finite, where $B_{\omega,\phi} := [i\ddbar \phi \otimes \Id_E, \Lambda_{\omega}]$.

A singular Hermitian metric $h$ on $E$ over a complex manifold $X$ is \emph{$L^2$-optimal} if for every Stein coordinate neighborhood $U \subset X$ with $E|_U$ trivial, $(U,E|_U,h)$ is $L^2$-optimal. When $E$ is a line bundle with $h = e^{-\varphi}$, $\varphi$ is \emph{$L^2$-optimal} if $h$ is, and $(D,\varphi)$ is \emph{$L^2$-optimal} if $(D,E,h)$ is.
\end{definition}

\begin{remark}
The integrals in \eqref{eq:a1} are independent of $\omega$ since $u$ is an $(n,0)$-form and $f$ an $(n,1)$-form.
\end{remark}

Deng-Ning-Wang \cite[Theorem 1.2]{DNW21} established that $C^2$-smooth $L^2$-optimal functions are plurisubharmonic, and asked whether continuity is sufficient. Separately, Deng-Ning-Wang-Zhou \cite[Theorem 1.1]{DNWZ22} showed $C^2$-smooth $L^2$-optimal Hermitian metrics are Nakano semi-positive. Since Nakano positivity implies Griffiths positivity for smooth metrics, Inayama \cite{Ina-AG} questioned whether $L^2$-optimality implies Griffiths semi-positivity for singular metrics.
We thus reformulate these questions as:

\begin{conj}[{\cite[Remark 1.9]{DNW21},\cite[Question 7.3]{Ina-AG}}]\label{conj DNW-Ina}
   Every continuous $L^2$-optimal Hermitian metric is Griffiths semi-positive.
\end{conj}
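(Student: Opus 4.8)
The plan is to prove Conjecture~\ref{conj DNW-Ina} by establishing an \emph{optimal} $L^2$ extension theorem for continuous $L^2$-optimal metrics on bounded planar domains, and then running a converse-$L^2$ argument on one-dimensional slices.

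\emph{Step 1 (reduction to one variable).} As recalled in the introduction, a continuous metric $h$ on $E\to X$ is Griffiths semi-positive exactly when $\log|u|_{h^*}$ is plurisubharmonic for every local holomorphic section $u$ of $E^*$, and a continuous function on a domain in $\CC^n$ is plurisubharmonic iff its restriction to each complex line is subharmonic. Since $L^2$-optimality is inherited by subdomains (immediate from the definition) and by affine complex-line slices (a restriction lemma for the $L^2$-optimal condition), the conjecture reduces to the following planar statement: \emph{if $D\subset\CC$ is a bounded domain and $h$ is a continuous $L^2$-optimal metric on $E=D\times\CC^r$, then $h$ is Griffiths -- equivalently, since $\dim_\CC D=1$, Nakano -- semi-positive.}

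\emph{Step 2 (optimal $L^2$ extension in the plane).} The decisive feature of \eqref{eq:a1} is that $B_{\omega,\phi}=[i\ddbar\phi\otimes\Id_E,\Lambda_\omega]$ involves $i\ddbar\phi\otimes\Id_E$ only, never the (a priori unknown) curvature of $h$; hence \eqref{eq:a1} is exactly H\"ormander's $L^2$ estimate for $\dbar$ on $E$ with $h$ treated as if flat. Fix $z_0\in D$ and $a\in\CC^r$. Running the sharp Ohsawa--Takegoshi construction of B{\l}ocki and Guan--Zhou -- take $\omega$ the Euclidean metric, $\phi=\phi_\eps$ a smooth strictly plurisubharmonic regularization of a weight with a logarithmic pole at $z_0$, and $f=\dbar(\chi\,a\,dz)$ with $\chi$ a cutoff near $z_0$, noting that $B_{\omega,\phi_\eps}$ is then the scalar operator $(\Delta\phi_\eps)\,\Id_E$ -- one solves $\dbar u=f$ with the estimate \eqref{eq:a1}, whose right-hand side $\int_D(\Delta\phi_\eps)^{-1}|f|^2_h e^{-\phi_\eps}\,d\lambda$ is, after the usual tuning of the regularization and since $h$ is continuous, at most $(1+o_\eps(1))\,C_D(z_0)\,|a|^2_{h(z_0)}$, where $C_D(z_0)$ is the classical (flat) optimal extension constant of $(D,z_0)$ -- in particular $C_D(z_0)=\pi\rho^2$ when $D=D(z_0,\rho)$. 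Then $F_\eps:=\chi\,a\,dz-u$ is holomorphic with $F_\eps(z_0)=a\,dz$, and a weak limit as $\eps\to0$ yields a holomorphic $E$-valued $(1,0)$-form $F$ on $D$ with $F(z_0)=a$ and $\int_D|F|^2_h\,d\lambda\le C_D(z_0)\,|a|^2_{h(z_0)}$. This is the optimal $L^2$ extension for the continuous $L^2$-optimal metric $h$, with $h$ in the $L^2$-norm and the constant sharp; for $r=1$ it is the Guan--Zhou estimate.

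\emph{Step 3 (the converse for continuous metrics).} For $C^2$-smooth $h$ the implication ``optimal extension $\Rightarrow$ Nakano semi-positive'' is \cite[Theorem 1.1]{DNWZ22}, proved by differentiating the extremal constant. For continuous $h$ I argue by contradiction. If $h$ is not Griffiths semi-positive on $D$, there is a local holomorphic section $u$ of $E^*$ with $\log|u|^2_{h^*}$ not subharmonic along some complex line; by continuity one may then pick $z_0$ with $b:=u(z_0)\ne0$, and $\rho>0$ with $\overline{D(z_0,\rho)}$ inside the chart and $u$ non-vanishing on it, such that, writing $m:=|b|^2_{h^*(z_0)}$ and $\Omega:=D(z_0,\rho)$, the solid sub-mean-value inequality fails by a fixed amount:
\[
\frac1{\pi\rho^2}\int_\Omega\log|u|^2_{h^*}\,d\lambda\ \le\ \log m-\delta_0\qquad\text{for some fixed }\ \delta_0>0 .
\]
Apply Step~2 on $\Omega$ with $a$ the $h(z_0)$-Riesz representative of $b$ (so $|a|^2_{h(z_0)}=\langle a,b\rangle=m$): one gets a holomorphic $F$ with $F(z_0)=a$ and $\int_\Omega|F|^2_h\,d\lambda\le\pi\rho^2 m$. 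The function $g:=\langle F,u\rangle$ is holomorphic on $\Omega$ with $g(z_0)=m$, and Cauchy--Schwarz gives $|g|^2\le|F|^2_h\,|u|^2_{h^*}$; hence, using Jensen's inequality, then the solid sub-mean-value inequality for the subharmonic function $\log|g|^2$ at $z_0$, and then the displayed defect,
\[
\pi\rho^2 m\ \ge\ \int_\Omega\frac{|g|^2}{|u|^2_{h^*}}\,d\lambda\ \ge\ \pi\rho^2\exp\!\Big(\tfrac1{\pi\rho^2}\!\int_\Omega\!\big(\log|g|^2-\log|u|^2_{h^*}\big)\,d\lambda\Big)\ \ge\ \pi\rho^2\,e^{\delta_0}\,m ,
\]
which is absurd since $\delta_0>0$. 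Therefore $h$ is Griffiths semi-positive, proving the conjecture; the case $r=1$ answers the Deng--Ning--Wang question, and general $r$ answers Inayama's.

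\emph{Main obstacle.} The genuinely new point is Step~3 for \emph{continuous} (non-$C^2$) metrics: the extremal constant can no longer be differentiated, so positivity must be extracted from integral inequalities by sharp-constant bookkeeping, and it is precisely the continuity of $h$ that converts a pointwise failure of the sub-mean-value inequality into the fixed defect $\delta_0$ that the optimal constant cannot absorb. A secondary technical issue is the slice-restriction lemma for the $L^2$-optimal condition used in Step~1.
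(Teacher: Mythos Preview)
Your proposal is correct and follows essentially the same strategy as the paper: reduce to one complex dimension via the restriction lemma for $L^2$-optimality (the paper's Lemma~\ref{pro nak res nak}) and the line-wise characterization of Griffiths positivity (Lemma~\ref{lem ch Grif line}), establish optimal $L^2$ extension on bounded planar domains by adapting B{\l}ocki's method (Theorem~\ref{thm optimal l2 ext}), and conclude via the converse implication. One correction to your assessment of where the difficulty lies: your Step~3 argument is precisely the proof of \cite[Theorem~1.3]{DNWZ22}, which already covers singular (hence continuous) metrics and which the paper simply cites as Lemma~\ref{lem DNWZ Grif}; the genuinely new work is your Step~2, where one must check that B{\l}ocki's minimal-solution trick and sharp weight construction survive when H\"ormander's estimate is available only as the black-box hypothesis~\eqref{eq:a1} --- this forces the extension of~\eqref{eq:a1} to \emph{continuous} strictly plurisubharmonic $\phi$ via Richberg (Proposition~\ref{prop cont psh}) and a careful smooth approximation of the Green function, details your sketch elides but which the paper carries out in full.
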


In this paper, adapting B\l ocki's method \cite{Blocki13}, we establish an optimal $L^2$-extension theorem for continuous $L^2$-optimal metrics on bounded planar domains as follows:

\begin{theorem}\label{thm optimal l2 ext}
 Let $D\subset\CC$ be a bounded domain, $E$  a   holomorphic vector bundle and $h$ a continuous Hermitian metric on $E$.  If $(D,E,h)$ is $L^2$-optimal, then for any $w \in D$ and $s \in E_w$, there exists $f \in H^0(D,E)$ with $f(w) = s$ satisfying
$$\int_D |f|^2_h  d\lambda_1 \leq \frac{\pi |s|^2_{h(w)}}{c_D(w)^2},$$
where $d\lambda_1$ is the Lebesgue measure of $\mathbb{C}$ and $c_D$ is the logarithmic capacity of $D$.
\end{theorem}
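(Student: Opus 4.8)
The plan is to run B\l ocki's proof of the Suita conjecture \cite{Blocki13}, using the hypothesis that $(D,E,h)$ is $L^2$-optimal in place of H\"ormander's $L^2$-estimate for $\bar\partial$. This substitution is legitimate because the curvature operator $B_{\omega,\phi}=[i\ddbar\phi\otimes\Id_E,\Lambda_\omega]$ depends only on $\phi$ and $\omega$ and not on the curvature of $h$: in \eqref{eq:a1} the metric $h$ intervenes only through its pointwise norm, and $L^2$-optimality is precisely the statement that the sharp $\bar\partial$-estimate survives the possible loss of positivity of $h$. First I would normalize. Since $D\subset\CC$, the bundle $E$ is holomorphically trivial; take $E=D\times\CC^r$ and regard $h$ as a continuous map from $D$ to the positive Hermitian $r\times r$ matrices. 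A constant change of frame preserves continuity, $L^2$-optimality, and the quantities in the statement, so we may assume $h(w)=\Id$, whence $|s|^2_{h(w)}=|s|^2$ and it suffices to produce $f\in H^0(D,E)$ with $f(w)=s$ and $\int_D|f|^2_h\,d\lambda_1\le \pi|s|^2/c_D(w)^2$. Let $G:=G_D(\cdot,w)<0$ be the negative Green function with logarithmic pole at $w$, normalized by $G(z)-\log|z-w|\to\log c_D(w)$ as $z\to w$, and recall the flux identity $\int_{\{G=-\tau\}}|\nabla G|\,ds=2\pi$ for every regular value $-\tau<0$.

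Next, for each small $\eps>0$, I would invoke B\l ocki's construction to obtain an admissible weight and cutoff built from $G$: a smooth strictly plurisubharmonic weight $\phi_\eps=2\kappa_\eps\circ G$, where $\kappa_\eps$ is a strictly convex function of one real variable with $\kappa_\eps(\sigma)-\sigma\to 0$ as $\sigma\to-\infty$ (so that $\phi_\eps$ carries the singularity $2\log|z-w|+O(1)$ at $w$ and $e^{-\phi_\eps}$ is non-integrable there), together with a cutoff $\chi_\eps=b_\eps\circ G$ equal to $1$ near $w$ and compactly supported in $D$, the profiles $\kappa_\eps$ and $b_\eps$ being coupled by the one-dimensional ordinary differential equation of \cite{Blocki13}, designed so that the resulting estimate becomes sharp as $\eps\to0$. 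Applying the $L^2$-optimal hypothesis for $(D,E,h)$ to the $\bar\partial$-closed $(1,1)$-form $dz\wedge\bar\partial\chi_\eps\otimes(s\cdot\one)$, with weight $\phi_\eps$ and K\"ahler form $\omega=i\ddbar\phi_\eps$ (for which $B_{\omega,\phi_\eps}=\Id$ on $(1,1)$-forms), yields $u_\eps$ with $\bar\partial u_\eps=dz\wedge\bar\partial\chi_\eps\otimes(s\cdot\one)$, necessarily satisfying $u_\eps(w)=0$, and
\[
\int_D|u_\eps|^2_h\,e^{-\phi_\eps}\,d\lambda_1\ \le\ \int_D\frac{|\bar\partial\chi_\eps|^2\,|s\cdot\one|^2_h}{(\phi_\eps)_{z\bar z}}\,e^{-\phi_\eps}\,d\lambda_1 ,
\]
so that $F_\eps:=\chi_\eps\,(s\cdot\one)-u_\eps\in H^0(D,E)$ with $F_\eps(w)=s$. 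Since $|\bar\partial\chi_\eps|^2=b_\eps'(G)^2|\partial_zG|^2$ and $(\phi_\eps)_{z\bar z}=2\kappa_\eps''(G)|\partial_zG|^2+(\text{lower order})$ carry the common factor $|\partial_zG|^2$, one can pass to the $G$-variable by coarea; invoking the asymptotics of $G$ at $w$ (which produces the factor $c_D(w)^{-2}$), the flux identity together with the area of the limiting disk about $w$ (which produces $\pi$), the B\l ocki ODE, and the continuity of $h$ (used to replace $|s\cdot\one|^2_h$ by $|s|^2_{h(w)}=|s|^2$ in the limit, on the shrinking region that carries the mass of the integral), the right-hand side is bounded by $(1+o(1))\,\pi|s|^2/c_D(w)^2$ as $\eps\to0$.

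Finally, on any fixed compact $K\subset D$ one has $e^{-\phi_\eps}\ge c_K>0$, so the displayed estimate furnishes a uniform $L^2_{\loc}(D)$ bound on the holomorphic sections $F_\eps$; by the Cauchy estimates a subsequence converges locally uniformly to some $f\in H^0(D,E)$ with $f(w)=s$. Together with the fact that $F_\eps-\chi_\eps(s\cdot\one)=-u_\eps$ and $\chi_\eps\to1$ locally away from $w$ — so that $\int_D|F_\eps|^2_h\,d\lambda_1$ is controlled by the right-hand side of the estimate up to $o(1)$ — Fatou's lemma gives $\int_K|f|^2_h\,d\lambda_1\le\liminf_\eps\int_D|F_\eps|^2_h\,d\lambda_1\le\pi|s|^2/c_D(w)^2$, and letting $K\uparrow D$ proves the theorem. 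The hard part is the second paragraph: the naturally occurring weight $\sim 2G$ is merely weakly plurisubharmonic (it is harmonic on $D\setminus\{w\}$) and singular at $w$, so it must be replaced by genuinely admissible smooth strictly plurisubharmonic weights $\phi_\eps$ through B\l ocki's convex regularization \emph{without} degrading the constant, and the coarea bookkeeping must be arranged so that the factors $c_D(w)$ and $\pi$ come out exactly — with the continuity of $h$ serving to promote the crude bound to the sharp one involving $|s|^2_{h(w)}$.
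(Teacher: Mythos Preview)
Your high-level strategy is right---the paper does adapt B\l ocki's argument with $L^2$-optimality replacing H\"ormander's estimate---but your implementation omits two essential ingredients of that argument.

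First, the $L^2$-optimal hypothesis supplies estimate \eqref{eq:a1} only for \emph{smooth} strictly plurisubharmonic $\phi$, whereas your $\phi_\eps=2\kappa_\eps\circ G$ carries the pole $2\log|z-w|$ at $w$ (as you note) and, since $G$ is harmonic on $D\setminus\{w\}$, the form $i\ddbar\phi_\eps=2\kappa_\eps''(G)\,i\partial G\wedge\dbar G$ degenerates at every critical point of $G$; so neither smoothness nor strict plurisubharmonicity is available, and you cannot set $\omega=i\ddbar\phi_\eps$. The paper deals with this in two preparatory steps: Proposition~\ref{prop cont psh} first upgrades the hypothesis to allow \emph{continuous} strictly plurisubharmonic weights via Richberg regularization, and then in the main proof $G$ is replaced by the smooth strictly plurisubharmonic $G_\delta(z)=v(z)+\tfrac12\log(|z|^2+\delta^2)-\text{const}$, at the price of an additional limit $\delta\to0$.

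Second, and more seriously, a single application of the basic estimate with one weight does \emph{not} deliver the optimal constant in B\l ocki's scheme. His ODE couples \emph{two} profiles $\tau$ and $\rho$ (equation \eqref{define tau rho} here), yielding two auxiliary functions $\phi=2G_\delta+\eta(-2G_\delta)$ and $\psi=\gamma(-2G_\delta)$. The paper's Theorem~\ref{thm L2 estimate by minimal solution} then exploits the \emph{minimal solution}: one solves $\dbar u=f$, takes $u$ minimal in $L^2(he^{\psi-\phi})$, observes that $ue^{\psi}$ is then minimal in $L^2(he^{-\phi})$, and applies $L^2$-optimality a second time to $\beta=\dbar(ue^{\psi})=e^{\psi}(f+u\,\dbar\psi)$. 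This bootstrapping is what produces the factor $(1-H)e^{2\psi-\phi}\ge1$ on the left-hand side, converting the weighted bound into the unweighted $\int_D|u|^2_h\,dV_\omega$ and yielding the sharp $\pi/c_D(w)^2$. Your scheme has only $\kappa_\eps$ (for $\phi$) and the cutoff profile $b_\eps$; there is no analogue of $\psi$ and no second application of the estimate. The ``ODE of B\l ocki'' you invoke between $\kappa_\eps$ and $b_\eps$ is not the ODE \eqref{define tau rho}, which relates the two weight profiles, not a weight to a cutoff; without it the computation does not close with the optimal constant.
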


Note that continuous $L^2$-optimal Hermitian metrics remain $L^2$-optimal when restricted to complex hyperplanes (Lemma \ref{pro nak res nak}). Since Griffiths semi-positivity is characterized by restrictions to complex lines (Lemma \ref{lem ch Grif line}) and the optimal $L^2$-extension property implies Griffiths semi-positivity (\cite[Theorem 1.3]{DNWZ22}),  Theorem \ref{thm optimal l2 ext} resolves Conjecture \ref{conj DNW-Ina}:

\begin{theorem}\label{thm DNW-Ina}
Every continuous $L^2$-optimal Hermitian metric is Griffiths semi-positive.
\end{theorem}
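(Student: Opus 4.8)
The plan is to obtain Theorem \ref{thm DNW-Ina} as a consequence of Theorem \ref{thm optimal l2 ext} together with the two reduction lemmas and \cite[Theorem 1.3]{DNWZ22} mentioned just before the statement. Both $L^2$-optimality and Griffiths semi-positivity are local properties, so I would fix a point $x_0 \in X$ and a Stein coordinate neighborhood $U$ of $x_0$ on which $E$ is trivial; by hypothesis $(U, E|_U, h)$ is $L^2$-optimal, and it suffices to prove $h$ is Griffiths semi-positive near $x_0$, viewing $U$ as a domain in $\CC^n$.

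First I would reduce to complex dimension one. By Lemma \ref{lem ch Grif line}, Griffiths semi-positivity of $h$ can be tested on complex lines, so it is enough to show that for every affine line $\ell \subset \CC^n$ through $x_0$ the restriction $h|_{\ell \cap U}$ is Griffiths semi-positive. Writing $\ell$ as an intersection of $n-1$ affine hyperplanes and applying Lemma \ref{pro nak res nak} repeatedly (continuity of the metric is evidently preserved under restriction), I get that $h|_{\ell \cap U}$ is a continuous $L^2$-optimal Hermitian metric on the one-dimensional complex manifold $\ell \cap U$; in particular, for any small disk $D$ with $x_0 \in D \Subset \ell \cap U$, the triple $(D, E|_D, h)$ is $L^2$-optimal.

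Then Theorem \ref{thm optimal l2 ext} applies on $D$: for every $w \in D$ and $s \in E_w$ there is $f \in H^0(D, E)$ with $f(w) = s$ and $\int_D |f|^2_h \, d\lambda_1 \le \pi |s|^2_{h(w)}/c_D(w)^2$. This is exactly the sharp (Suita-type) $L^2$-extension estimate, so $h|_D$ has the optimal $L^2$-extension property in the sense required by \cite[Theorem 1.3]{DNWZ22}, and that theorem yields that $h|_D$ is Griffiths semi-positive. Letting $D$ range over such disks covering $\ell \cap U$, $h|_{\ell \cap U}$ is Griffiths semi-positive; since $\ell$ and then $x_0$ were arbitrary, $h$ is Griffiths semi-positive on all of $X$.

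The real content of the theorem lies in Theorem \ref{thm optimal l2 ext} and in the cited inputs, so once those are in hand the present statement is essentially a matter of bookkeeping. The only steps I would watch carefully are (i) that iterating the hyperplane restriction of Lemma \ref{pro nak res nak} does reach an arbitrary line through $x_0$ while preserving both continuity and $L^2$-optimality, and (ii) --- the place where the \emph{optimality} of the extension constant is genuinely used --- that $\pi/c_D(w)^2$ is precisely the threshold constant below which \cite[Theorem 1.3]{DNWZ22} detects positivity, i.e.\ that the normalizations for the logarithmic capacity and the extremal $L^2$-extension constant agree. I expect (ii) to be the only mildly delicate point.
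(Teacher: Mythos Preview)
Your proposal is correct and follows essentially the same route as the paper: reduce to complex lines via Lemma~\ref{lem ch Grif line}, preserve $L^2$-optimality under hyperplane restriction via Lemma~\ref{pro nak res nak}, apply Theorem~\ref{thm optimal l2 ext} on disks, and invoke \cite[Theorem~1.3]{DNWZ22}. Your point~(ii) is exactly what the paper handles via Example~\ref{exa Dr}, which gives $c_{\DD_r(w)}(w)=1/r$ and hence $\pi/c_{\DD_r(w)}(w)^2=\pi r^2$; note that to match the optimal $L^2$-extension property of Lemma~\ref{lem DNWZ Grif} you should apply Theorem~\ref{thm optimal l2 ext} on each sub-disk $\DD_r(w)\Subset \ell\cap U$ (which is $L^2$-optimal since $h|_{\ell\cap U}$ is) rather than on a single fixed disk $D$.
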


\begin{remark}
The continuity assumption can be relaxed to: (strong) upper semi-continuity on $D$ and continuity on $D \setminus P$, where $P \subset D$ is a closed pluripolar subset.
\end{remark}

Recall notations from \cite[\S 2.1]{Bern15} and \cite[\S 4.1]{DNWZ22}. Consider a projective fibration $\pi: Y^{n+m} \to X^n$ of complex manifolds and a holomorphic vector bundle $E \to Y$ with continuous Hermitian metric $h$. Denote by $F$ the \textit{ relative  direct image sheaf} $ \pi_*(K_{Y/X} \otimes E)$. If $\dim H^0(Y_x, K_{Y_x} \otimes E|_{Y_x})$ is constant for all $x \in X$, where $Y_x := \pi^{-1}(x)$, then:
\begin{enumerate}
    \item $F$ is a holomorphic vector bundle with fiber $F_x \cong H^0(Y_x, K_{Y_x} \otimes E|_{Y_x})$
    \item The $L^2$-metric $h_F$ on $F$ is defined for $u, v \in F_x$ by:
    \[
    \langle u, v\rangle_{h_F} := \int_{Y_x} i^{m^2} \sum_{\alpha,\beta} h_{\alpha\bar{\beta}} u_\alpha \wedge \overline{v_\beta}
    \]
\end{enumerate}

Following \cite[Theorem 4.3]{DNWZ22} and Theorem \ref{thm DNW-Ina}, we obtain that

\begin{corollary}
	In the above setting, assume $h$ is $L^2$-optimal. Then $h_F$ is continuous and locally $L^2$-optimal: for each $x \in X$, there exists a neighborhood $U_x \ni x$ such that $h_F|_{U_x}$ is $L^2$-optimal. Consequently, $h_F$ is Griffiths semi-positive. 
\end{corollary}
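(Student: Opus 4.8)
The plan is to verify the three assertions in order: continuity of $h_F$, local $L^2$-optimality, and then invoke Theorem \ref{thm DNW-Ina} to conclude Griffiths semi-positivity. The first two assertions are essentially a matter of quoting the machinery of Berndtsson and of Deng--Ning--Wang--Zhou verbatim, since by hypothesis $h$ is continuous on $Y$ and $L^2$-optimal; the only genuinely new input is Theorem \ref{thm DNW-Ina}, which converts local $L^2$-optimality into Griffiths semi-positivity.

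First I would establish continuity of $h_F$. Fix $x_0 \in X$ and a local holomorphic frame $e_1,\dots,e_r$ of $F$ near $x_0$ (which exists by the constant-rank hypothesis and the Grauert direct-image theorem); then $h_F$ is represented by the matrix $\big(\langle e_i, e_j\rangle_{h_F}(x)\big)$ whose entries are fibre integrals $\int_{Y_x} i^{m^2}\sum_{\alpha\beta} h_{\alpha\bar\beta}(e_i)_\alpha \wedge \overline{(e_j)_\beta}$. Since $\pi$ is a smooth projective fibration, one may trivialise $\pi$ differentiably over a neighbourhood $U_{x_0}$, pull the integrands back to a fixed fibre $Y_{x_0}$, and observe that the integrand depends continuously on $x$ (because $h$ is continuous on the total space and the holomorphic sections $e_i$ vary holomorphically, hence continuously). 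Dominated convergence on the compact fibre $Y_{x_0}$ then gives continuity of each matrix entry, hence of $h_F$. This is exactly the content of the relevant part of \cite[Theorem 4.3]{DNWZ22}, so I would state it as such and give only this brief sketch.

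Next, local $L^2$-optimality. By \cite[Theorem 4.3]{DNWZ22} (which is stated precisely for the setting recalled in the excerpt), whenever the Hermitian metric $h$ on $E \to Y$ is $L^2$-optimal and continuous, the induced $L^2$-metric $h_F$ on the direct image $F = \pi_*(K_{Y/X}\otimes E)$ is locally $L^2$-optimal: for each $x\in X$ one finds a Stein coordinate neighbourhood $U_x$ with $F|_{U_x}$ trivial such that $(U_x, F|_{U_x}, h_F|_{U_x})$ satisfies the $L^2$-optimal estimate \eqref{eq:a1} for every smooth strictly plurisubharmonic $\phi$ and every K\"ahler form $\omega$ on $U_x$. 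I would simply invoke this; the restriction to a sufficiently small $U_x$ is needed only so that $F$ is globally trivial there and the fibration $\pi^{-1}(U_x)\to U_x$ is of product type up to diffeomorphism, which are the hypotheses under which the Berndtsson-type curvature/$\bar\partial$-computation in \cite{DNWZ22} is carried out.

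Finally, since $h_F$ is continuous (Step 1) and, on each $U_x$, an $L^2$-optimal Hermitian metric on the trivial bundle $F|_{U_x}$ (Step 2), Theorem \ref{thm DNW-Ina} applies to $h_F|_{U_x}$ and shows it is Griffiths semi-positive on $U_x$. Griffiths semi-positivity is a local condition (it is the plurisubharmonicity of $\log|u|_{h_F^*}$ for local holomorphic sections $u$ of $F^*$, which can be checked on the cover $\{U_x\}$), so $h_F$ is Griffiths semi-positive on all of $X$. The main obstacle is not in any of these steps individually but in the honest bookkeeping that the hypotheses of \cite[Theorem 4.3]{DNWZ22} and of Theorem \ref{thm DNW-Ina} are met: in particular one must make sure the neighbourhoods $U_x$ can be taken to be Stein coordinate balls with $F|_{U_x}$ trivial, so that ``locally $L^2$-optimal'' in the sense defined in the paper is literally the hypothesis of Theorem \ref{thm DNW-Ina}. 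Once that is checked the corollary follows formally.
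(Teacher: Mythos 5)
Your proposal is correct and follows essentially the same route as the paper, which proves this corollary simply by citing \cite[Theorem 4.3]{DNWZ22} for the continuity and local $L^2$-optimality of $h_F$ and then applying Theorem \ref{thm DNW-Ina} (Griffiths semi-positivity being a local condition). Your additional sketch of the fibre-integral continuity argument and the bookkeeping about Stein trivializing neighbourhoods is consistent with, and slightly more detailed than, what the paper records.
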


The remaining parts of this article are organized as follows. Section \ref{ss l2 minimal} establishes modified $L^2$ estimates for $L^2$-optimal triples; Section \ref{ss Green function} reviews basic properties of complex Green functions; Section \ref{ss opt l2 ext} proves Theorem \ref{thm optimal l2 ext}; and Section \ref{ss solution to conj DNW-Ina} resolves Theorem \ref{thm DNW-Ina}.

\textbf{Acknowledgement:}
The authors would like to thank Dr. Wang Xu for his helpful discussions.

\section{Preliminaries}  

\subsection{$L^2$ minimal solution}\label{ss l2 minimal}
In this section, we establish modified $L^2$ estimates for $L^2$-optimal triples twisted with continuous strictly plurisubharmonic functions using the $L^2$-minimal solution  and Richberg's global regularization theorem.

\begin{lemma}[Richberg's global regularization theorem, \cite{Richberg}]\label{Richberg}
  Let $\phi$ be a continuous plurisubharmonic function on $X$ which is
strictly plurisubharmonic on an open subset $D\subset X$, with $i\ddbar \phi \ge \lambda$ for some continuous
positive Hermitian $(1,1)$-form $\lambda$ on $D$. For any positive continuous function $\eps$ on $D$, there
exists a plurisubharmonic function $\phi_\eps$ in $C^0(X)\cap C^\infty(D)$  such that $\phi\le \phi_\eps\le \phi+\eps$ on $D$ and $\phi_\eps = \phi$ on $X\setminus D$, which is strictly plurisubharmonic on $D$ and satisfies $\ddbar \phi_\eps \ge (1-\eps)\lambda$.
\end{lemma}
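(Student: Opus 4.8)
The final displayed statement in the excerpt is Richberg's global regularization theorem (Lemma~\ref{Richberg}), quoted from \cite{Richberg}. What follows is a proof plan for \emph{that} statement.

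\medskip

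The plan is to prove Lemma~\ref{Richberg} by the classical local-regularization-and-gluing scheme, carried out with a maximum-type construction so that strict plurisubharmonicity is preserved with the quantitative lower bound $(1-\eps)\lambda$. First I would reduce to a purely local statement: for every point $x_0 \in D$ there is a coordinate ball $B = B(x_0, r_{x_0}) \Subset D$ on which $i\ddbar\phi \ge \lambda \ge c_{x_0}\, i\ddbar|z|^2$ for a constant $c_{x_0}>0$ (using continuity of $\lambda$ and shrinking $r_{x_0}$), and on which the standard mollification $\phi * \rho_\delta$ is defined for $\delta$ small. On $B$ the mollified function $\phi_\delta := \phi * \rho_\delta$ is smooth, converges uniformly to $\phi$ as $\delta \to 0$, is decreasing in $\delta$ when $\phi$ is psh, and satisfies $i\ddbar \phi_\delta = (i\ddbar\phi)*\rho_\delta \ge \lambda * \rho_\delta$; on a slightly smaller ball, continuity of $\lambda$ gives $\lambda * \rho_\delta \ge (1-\eps/2)\lambda$ for $\delta$ small. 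So locally we have smooth strictly psh approximants from above that retain the lower bound, with loss controlled by $\eps$.

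The second step is the gluing. Cover $D$ by a locally finite family of such balls $B_j$ shrunk to $B_j' \Subset B_j'' \Subset B_j$, choose on each $B_j$ a smooth approximant $\phi_j := \phi*\rho_{\delta_j}$ with $\phi \le \phi_j \le \phi + \eta_j$ where $\eta_j < \eps$ is taken small (and, crucially, pointwise smaller than $\inf_{B_j'}\eps$ and small compared to the ``separation'' one needs at boundaries of patches), and $i\ddbar\phi_j \ge (1-\eps/2)\lambda$ on $B_j''$. Then one forms a regularized maximum $\Phi := \mathrm{reg\,max}(\phi_{j_1} + a_{j_1}, \dots)$ using Demailly's smooth regularized-maximum function $M_\theta$ (which agrees with $\max$ outside a $\theta$-neighborhood of the diagonal, is convex and nondecreasing in each variable, hence preserves psh and the Hessian lower bound, and is smooth), with small constants $a_j$ chosen so that on each compact piece only finitely many patches are active and the active one is the intended $\phi_j$; near $\partial D$ the constants are tuned so that $\Phi$ matches $\phi$ itself (using that $\phi_j \ge \phi$ and that outside $D$ we simply keep $\phi$, which is continuous and psh on $X$). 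The result $\phi_\eps := \Phi$ on $D$, $\phi_\eps := \phi$ on $X \setminus D$, is then continuous on $X$ (the two definitions agree in a neighborhood of $\partial D$ inside $D$), smooth on $D$, satisfies $\phi \le \phi_\eps \le \phi + \eps$, and $i\ddbar\phi_\eps \ge (1-\eps)\lambda$ on $D$ by the Hessian lower bounds surviving the regularized max and the psh pieces.

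The main obstacle, and the part requiring genuine care rather than routine estimates, is the bookkeeping at the ``boundary'' between active patches and, above all, near $\partial D$: one must choose the shift constants $a_j$ and the mollification parameters $\delta_j$ simultaneously so that (i) on each $B_j'$ the regularized maximum equals the single function $\phi_j$ shifted by a controlled amount, hence is smooth and obeys $i\ddbar \ge (1-\eps)\lambda$ there; (ii) the total upward error stays below the given function $\eps$ pointwise (not just below a constant), which forces the $\eta_j$ to be chosen after the cover, in a way compatible with the infima of $\eps$ on the $\overline{B_j'}$; and (iii) as $x \to \partial D$ the construction degenerates continuously to $\phi$, so that the glued function extends by $\phi$ across $\partial D$ without introducing a jump or losing upper semicontinuity — this uses that $\phi_j \downarrow \phi$ as $\delta_j \downarrow 0$ and a diagonal/exhaustion argument choosing $\delta_j \to 0$ fast enough on patches accumulating at $\partial D$. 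Once this combinatorial tuning is set up, verifying each claimed property (continuity on $X$, smoothness and strict psh on $D$, the two-sided bound, the Hessian estimate, and equality with $\phi$ off $D$) is a direct check using the properties of mollification and of Demailly's regularized maximum.
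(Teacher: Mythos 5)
The paper itself gives no proof of Lemma \ref{Richberg}: it is quoted from \cite{Richberg} (it is also Theorem I.5.21 in Demailly's book), so I am comparing your outline with the classical Richberg--Demailly argument. Your overall route --- local smoothing by convolution, then gluing by a regularized maximum, with the loss in the Hessian bound absorbed into the factor $(1-\eps)$ --- is exactly the standard one, and the local step (monotone convergence of $\phi*\rho_\delta$, $i\ddbar(\phi*\rho_\delta)\ge\lambda*\rho_\delta\ge(1-\eps/2)\lambda$ on slightly smaller balls) is fine.

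However, two points in your gluing step are genuinely off. First, your parenthetical claim that the glued function ``matches $\phi$ itself'' so that ``the two definitions agree in a neighborhood of $\partial D$ inside $D$'' cannot be arranged: $\phi$ is merely continuous there, so if $\phi_\eps$ coincided with $\phi$ on an open subset of $D$ it could not be smooth on $D$. What the construction actually delivers, and what continuity on $X$ requires (since the given $\eps$ need not decay at $\partial D$), is only that $\phi_\eps-\phi\to 0$ on approach to $\partial D$; the clean way is to replace $\eps$ from the start by a smaller positive continuous function tending to $0$ at $\partial D$, after which $\phi\le\phi_\eps\le\phi+\eps$ forces the boundary matching. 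Second, additive constants $a_j$ are too weak a patching device: since $\phi*\rho_{\delta_j}\ge\phi$, a piece $\phi_j+a_j$ with $a_j\ge 0$ never drops below $\phi$ near $\partial B_j$, so near the boundary of its own patch it can stay within the $\theta$-window of the maximum while ceasing to be defined just outside $B_j$, which destroys smoothness of the regularized maximum; whereas $a_j<0$ can push $\phi_j+a_j$ below $\phi$ on $B_j'$ (mollification gains nothing where $\phi$ happens to be smooth), jeopardizing the lower bound $\phi\le\phi_\eps$ where that patch is the only one available. Richberg's actual trick is a small quadratic corrector, e.g.\ $\psi_j=\phi*\rho_{\delta_j}+\delta_j'\bigl(r_j''^2-|z-a_j|^2\bigr)$, which keeps $\psi_j\ge\phi$ on the inner ball, dips strictly below $\phi$ by a definite amount on the outer shell (after choosing $\delta_j$ small), and perturbs the Hessian only by $O(\delta_j')$, so $i\ddbar\psi_j\ge(1-\eps)\lambda$ survives; this is precisely the mechanism that makes the regularized maximum smooth across patch boundaries. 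Finally, you verify continuity and smoothness but never check that $\phi_\eps$ is plurisubharmonic \emph{across} $\partial D$, which the lemma asserts; this is easy and should be said: since $\phi_\eps\ge\phi$ on $X$ with equality on $X\setminus D$ and $\phi$ is plurisubharmonic, the sub-mean-value inequality for $\phi_\eps$ at every point of $X\setminus D$ follows from that for $\phi$, while on $D$ it holds by construction, and $\phi_\eps$ is continuous.
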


Then by Lemma \ref{Richberg}, we can obtain that

\begin{proposition}\label{prop cont psh}
   Let $D$ be a bounded domain in $\CC$, $E$  a trivial holomorphic vector bundle and $h$ a continuous Hermitian metric on $E$.  Let $\phi$ be a continuous strictly plurisubharmonic function $\phi$ with $i\ddbar\phi\ge\lambda$  for some continuous
positive Hermitian $(1,1)$-form $\lambda$ on $D$.
  Assume that $(D,E,h)$ is  {$L^2$-optimal}, then for   any K\"{a}hler metric $\omega$ on $D$,
   the equation $\dbar u=f$ can be solved on $D$ for any $\dbar$-closed $E$-valued $(n,1)$-form
   $f \in L^2_{(n,1)}(D,E;\loc)$
   with the estimate:
   \begin{equation*}
   \int_D|u|^2_{\omega,h} e^{-\phi} dV_\omega
   \leq
  \int_D \inner{B_\lambda^{-1}f,f}_{\omega,h} e^{-\phi} dV_{\omega},
  \end{equation*}
   provided that the right-hand side is finite, where $B_\lambda:=[\lambda, \Lambda_{\omega}]$.
\end{proposition}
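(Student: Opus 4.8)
The plan is to deduce the estimate from the definition of $L^2$-optimality by approximating the continuous weight $\phi$ by smooth strictly plurisubharmonic functions and then passing to the limit. Since $D \subset \CC$, the curvature operator simplifies: $B_{\omega,\psi} = [i\ddbar\psi \otimes \Id_E, \Lambda_\omega]$ acting on $(1,1)$-forms is just multiplication by the scalar that compares $i\ddbar\psi$ to $\omega$, so $B_{\omega,\psi}^{-1}$ is well-defined and monotone: if $i\ddbar\psi_1 \ge i\ddbar\psi_2 > 0$ as $(1,1)$-forms, then $\inner{B_{\omega,\psi_1}^{-1}f,f} \le \inner{B_{\omega,\psi_2}^{-1}f,f}$ pointwise. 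I would record this monotonicity first, together with the observation that $B_\lambda = [\lambda,\Lambda_\omega]$ is invertible since $\lambda$ is positive, and that $i\ddbar\phi \ge \lambda$ gives $\inner{B_{\omega,\phi}^{-1}f,f} \le \inner{B_\lambda^{-1}f,f}$.

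Next I would apply Richberg's theorem (Lemma \ref{Richberg}): take any $\eps > 0$ (constant function suffices), extend $\phi$ suitably — or work on a slightly smaller subdomain and use that $\phi$ is already continuous psh there — to obtain $\phi_\eps \in C^\infty(D)$ strictly plurisubharmonic with $\phi \le \phi_\eps \le \phi + \eps$ and $i\ddbar\phi_\eps \ge (1-\eps)\lambda$. Now $\phi_\eps$ is a legitimate test function in the definition of $L^2$-optimality, so there is $u_\eps$ with $\dbar u_\eps = f$ and
\begin{equation*}
\int_D |u_\eps|^2_{\omega,h} e^{-\phi_\eps}\, dV_\omega \le \int_D \inner{B_{\omega,\phi_\eps}^{-1} f, f}_{\omega,h} e^{-\phi_\eps}\, dV_\omega \le \frac{1}{1-\eps}\int_D \inner{B_\lambda^{-1} f, f}_{\omega,h} e^{-\phi}\, dV_\omega,
\end{equation*}
where the last inequality uses the monotonicity above applied to $(1-\eps)\lambda$ together with $\phi_\eps \ge \phi$. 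Since $\phi_\eps \le \phi + \eps$, the left side dominates $e^{-\eps}\int_D |u_\eps|^2_{\omega,h} e^{-\phi}\, dV_\omega$. Thus the solutions $u_\eps$ are uniformly bounded in the weighted $L^2$ space $L^2_{(n,0)}(D,E; e^{-\phi} dV_\omega)$ as $\eps \to 0$ (here I use that $\phi$ is bounded above on the bounded domain $D$, or at least locally bounded, so this is a genuine Hilbert space norm with the right local behaviour).

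Finally I would extract a weakly convergent subsequence $u_\eps \rightharpoonup u$ in that Hilbert space; the weak limit still satisfies $\dbar u = f$ (the operator $\dbar$ is weakly closed, i.e. its graph is weakly closed since it is a closed densely defined operator), and by weak lower semicontinuity of the norm,
\begin{equation*}
\int_D |u|^2_{\omega,h} e^{-\phi}\, dV_\omega \le \liminf_{\eps\to 0} \int_D |u_\eps|^2_{\omega,h} e^{-\phi}\, dV_\omega \le \lim_{\eps\to 0} \frac{e^{\eps}}{1-\eps}\int_D \inner{B_\lambda^{-1} f, f}_{\omega,h} e^{-\phi}\, dV_\omega = \int_D \inner{B_\lambda^{-1} f, f}_{\omega,h} e^{-\phi}\, dV_\omega,
\end{equation*}
which is the desired estimate. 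The main obstacle I anticipate is the bookkeeping at the boundary of $D$ when applying Richberg's theorem: Lemma \ref{Richberg} is stated for a function on an ambient $X$ that is strictly psh on an open subset, so to apply it with $X = D$ I must either exhaust $D$ by relatively compact subdomains $D_j \Subset D$ (solving on each $D_j$ with uniform bounds, then diagonalizing — this also needs the $L^2$-optimality to descend to subdomains, which follows since any Stein $D_j$ is again a candidate) or verify that $\phi$ itself, being continuous strictly psh on $D$ with $i\ddbar\phi \ge \lambda$, can play the role of the ambient function with $D$ the strict-psh locus. I will adopt the exhaustion approach to stay safely within the hypotheses of Lemma \ref{Richberg}, taking care that the weak limits glue consistently across the exhaustion via a further diagonal argument.
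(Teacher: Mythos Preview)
Your core argument is correct and essentially identical to the paper's: approximate $\phi$ via Richberg to get smooth $\phi_\eps$ with $\phi\le\phi_\eps\le\phi+\eps$ and $i\ddbar\phi_\eps\ge(1-\eps)\lambda$, apply the $L^2$-optimal hypothesis to $\phi_\eps$, chain the inequalities exactly as you wrote, and pass to the limit. The paper's limiting step differs cosmetically: instead of weak convergence plus lower semicontinuity of the norm, it observes that the differences $u_\eps-u_{\eps_0}$ are holomorphic $(n,0)$-forms bounded in $L^2_{\loc}$ (using continuity of $h$), extracts a compactly convergent subsequence by Montel, and then applies Fatou. Both routes are standard and yield the same conclusion.

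Your final paragraph, however, overcomplicates matters and introduces a gap. Lemma~\ref{Richberg} applies directly with $X=D$ and the strictly plurisubharmonic locus also equal to $D$: the output $\phi_\eps$ then lies in $C^0(D)\cap C^\infty(D)=C^\infty(D)$, which is exactly what you need. No exhaustion is required, and the paper does none. Your proposed exhaustion would in fact be problematic: the hypothesis is that the \emph{triple} $(D,E,h)$ is $L^2$-optimal, i.e.\ the estimate \eqref{eq:a1} holds on $D$ itself, and this does \emph{not} formally imply that $(D_j,E,h)$ is $L^2$-optimal for a subdomain $D_j\Subset D$ (that stronger property is what the paper calls ``$h$ is $L^2$-optimal''). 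So your justification ``which follows since any Stein $D_j$ is again a candidate'' is not valid in this setting. Simply drop the exhaustion and apply Richberg on $D$ directly; then your proof is complete and matches the paper's.
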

\begin{proof}

  By  Lemma \ref{Richberg}, for any $\eps\in(0,1)$, there is  a smooth strictly plurisubharmonic function $\phi_\eps$ on $D$ such that $\phi\le\phi_\eps\le\phi+\eps$ and
  $i\ddbar\phi_\eps\ge(1-\eps)\lambda$. Since  $(D,E,h)$ is  {$L^2$-optimal}, then for any $\dbar$-closed $E$-valued $(n,1)$-form   $f \in L^2_{(n,1)}(D,E;\loc)$, for any K\"{a}hler metric $\omega$ on $D$, there is  $u_{\eps}\in L^2_{(n,0)}(D,E;\loc)$ such that
     $\dbar u_{\eps}=f$ and
   \begin{align*}
   \int_D|u_{\eps}|^2_{\omega,h} e^{-\phi} dV_\omega&\leq
   e^{\eps}\int_D|u_{\eps}|^2_{\omega,h} e^{-\phi_\eps} dV_\omega\\
   &\leq
  e^{\eps}\int_D \inner{B_{\phi_\eps}^{-1}f,f}_{\omega,h} e^{-\phi_\eps} dV_{\omega}\\
  &\leq e^{\eps}\int_D \inner{B_{(1-\eps)\lambda}^{-1}f,f}_{\omega,h} e^{-\phi_\eps} dV_{\omega}\\
  &\leq \frac{e^{\eps}}{1-\eps}\int_D \inner{B_{\lambda}^{-1}f,f}_{\omega,h} e^{-\phi} dV_{\omega}.
  \end{align*}
   Since $h$ is continuous, then $\{u_{\eps}\}$ is bounded in $L^2_{(n,0)}(D,E;\loc)$ and especially $\{u_{\eps}-u_{\eps_0}\}$ is bounded in $L^2_\loc(U)$. Since $u_{\eps}-u_{\eps_0}$ are holomorphic $(n,0)$-forms, then we can take a sequence $\varepsilon_k\to0$ such that
     $u_{\eps_k}-u_{\eps_0}$ compactly converging to a holomorphic $(n,0)$-form $u-u_{\eps_0}$ on $D$.
    Therefore, $\dbar u=f$ and by Fatou's lemma,
   we have
  \begin{align*}
     \int_D|u|^2_{\omega,h} e^{-\phi} dV_\omega
   &\le \liminf_{\varepsilon_k\to 0}\int_D|u_{\eps_k}|^2_{\omega,h} e^{-\phi_\eps} dV_\omega\\
     & \le \liminf_{\varepsilon_k\to 0} \frac{e^{\eps_k}}{1-\eps_k}\int_D \inner{B_{\lambda}^{-1}f,f}_{\omega,h} e^{-\phi} dV_{\omega}\\
     & \le \int_D \inner{B_{\lambda}^{-1}f,f}_{\omega,h} e^{-\phi} dV_{\omega}.
  \end{align*}
\end{proof}

 \begin{theorem}[{\cite[Theorem 2]{Blocki13}}]\label{thm L2 estimate by minimal solution}
 Let $D$ be a bounded domain in $\CC$, $E$  a trivial holomorphic vector bundle and $h$ a continuous Hermitian metric on $E$. Assume that $(D,E,h)$ is $L^2$-optimal. Let $\phi$ be a bounded, continuous and strictly plurisubharmonic function with $i\ddbar\phi\ge\lambda$  for some continuous
positive Hermitian $(1,1)$-form $\lambda$ on $D$. Let $\psi\in C^{1,1}(D)$ is  bounded from above on $D$ and  satisfy $$ i\partial\psi\wedge\dbar\psi \le H\lambda$$ on $D$   for some $H\in L^\infty(D)$ with $0<H<1$.
Let $f\in L^2_{(n,1)}(D,E;\loc)$ be $\dbar$-closed  such that
$$\int_D  \inner{B_\lambda^{-1}f,f}_{\omega,h} dV_{\omega}<+\infty.$$ Then there exists $u\in L^2_{(n,0)}(D,E;\loc)$ solving $\dbar u=f$ such that for every $b>0$,
 $$\int_D|u|^2_{\omega,h}(1 - H)e^{2\psi-\phi}dV_\omega\le
 (1+ \frac{1}{b})\int_D  \inner{B_\lambda^{-1}f,f}_{\omega,h}\frac{1+bH}{1-H} e^{2\psi-\phi} dV_{\omega}.\,$$
  provided that the right-hand side is finite.
 If in addition $H \le c<1$ on $\supp f$, then
 $$\int_D|u|^2_{\omega,h}(1 - H)e^{2\psi-\phi}dV_\omega\le
   \frac{1+\sqrt{c}}{1-\sqrt{c}}\int_D  \inner{B_\lambda^{-1}f,f}_{\omega,h}  e^{2\psi-\phi} dV_{\omega}.$$
 \end{theorem}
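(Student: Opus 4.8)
My plan is to follow B\l ocki's proof of \cite[Theorem~2]{Blocki13} step for step, with Proposition~\ref{prop cont psh} substituted for H\"ormander's $L^2$-estimate on pseudoconvex domains; the continuity of $h$ is used only inside that proposition, and the $L^2$-minimal solution is the device that promotes the estimate to a twisted weight. We may assume the right-hand side of the first inequality is finite; since $\phi$ is bounded and $\psi$ is bounded above, the three weights $e^{-\phi}$, $e^{\psi-\phi}$, $e^{2\psi-\phi}$ are mutually comparable up to multiplicative constants, so every finiteness requirement invoked below is automatic, and no density reduction on $f$ is needed.

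\emph{Step 1 (twist and transfer).} Put $\phi_1:=\phi-\psi$. By Proposition~\ref{prop cont psh} the equation $\dbar u=f$ has a solution of finite $L^2(D,E;e^{-\phi})$-norm, hence also of finite $L^2(D,E;e^{-\phi_1})$-norm; let $u_0$ be the minimal such solution, so $u_0\perp\Ker\dbar$ in $L^2(D,E;e^{-\phi_1})$. Set $w:=e^{\psi}u_0$. Then $\dbar w=e^{\psi}(f+\dbar\psi\wedge u_0)=:g$, which is $\dbar$-closed (it is $\dbar$-exact); the identity $\int_D\inner{u_0,\gamma}_h\,e^{-\phi_1}\,dV=\int_D\inner{w,\gamma}_h\,e^{-\phi}\,dV$ shows $w\perp\Ker\dbar$ in $L^2(D,E;e^{-\phi})$; and $\int_D|w|^2_{\omega,h}e^{-\phi}\,dV_\omega=\int_D|u_0|^2_{\omega,h}e^{2\psi-\phi}\,dV_\omega<+\infty$. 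Thus $w$ is the $L^2(D,E;e^{-\phi})$-minimal solution of $\dbar w=g$, and Proposition~\ref{prop cont psh} (which bounds some solution, hence the minimal one) gives
\[
\int_D|u_0|^2_{\omega,h}\,e^{2\psi-\phi}\,dV_\omega\ \le\ \int_D\inner{B_\lambda^{-1}(f+\dbar\psi\wedge u_0),\ f+\dbar\psi\wedge u_0}_{\omega,h}\,e^{2\psi-\phi}\,dV_\omega .
\]
Twisting by $\phi-\psi$ (rather than $\phi-2\psi$) is what makes the two copies of $e^{\psi}$ combine into $e^{2\psi}$ while leaving the $B_\lambda$-pairing of the $(n,1)$-form with itself free of any $e^{\psi}$.

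\emph{Step 2 (Cauchy--Schwarz and absorption).} On a planar domain the hypothesis $i\partial\psi\wedge\dbar\psi\le H\lambda$ gives, pointwise, $\inner{B_\lambda^{-1}(\dbar\psi\wedge u_0),\dbar\psi\wedge u_0}_{\omega,h}\,dV_\omega\le H\,|u_0|^2_{\omega,h}\,dV_\omega$ (a one-line computation in a local coordinate, as in \cite{Blocki13}). For a measurable $s>0$, Cauchy--Schwarz for the positive form $B_\lambda^{-1}$ yields pointwise
\[
\inner{B_\lambda^{-1}(f+\dbar\psi\wedge u_0),\ f+\dbar\psi\wedge u_0}_{\omega,h}\ \le\ (1+s)\,\inner{B_\lambda^{-1}f,f}_{\omega,h}+(1+s^{-1})\,H\,|u_0|^2_{\omega,h}.
\]
Taking $s:=(b+1)H/(1-H)$ makes $1-(1+s^{-1})H=\tfrac{b}{b+1}(1-H)$ and $1+s=\tfrac{1+bH}{1-H}$; since $\int_D|u_0|^2_{\omega,h}e^{2\psi-\phi}\,dV_\omega<+\infty$ and $(1+s^{-1})H$ is bounded, one transposes the $|u_0|^2$-term to the left and obtains the first assertion with $u:=u_0$. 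For the second, $\inner{B_\lambda^{-1}f,f}_{\omega,h}$ vanishes off $\supp f$, where $H\le c$, so $\tfrac{1+bH}{1-H}\le\tfrac{1+bc}{1-c}$ on the support of the integrand; minimizing $(1+\tfrac1b)(1+bc)$ over $b>0$ at $b=c^{-1/2}$ gives the value $(1+\sqrt c)^2$, and $\tfrac{(1+\sqrt c)^2}{1-c}=\tfrac{1+\sqrt c}{1-\sqrt c}$.

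\emph{Where the difficulty lies.} There is no new analytic input beyond Proposition~\ref{prop cont psh}; the hard part will be purely the bookkeeping of weights and constants — choosing the twist $\phi-\psi$ so that the conjugation squares $e^{\psi}$ to $e^{2\psi}$ with a clean error term $\dbar\psi\wedge u_0$; allowing the Cauchy--Schwarz parameter $s$ to vary with the point through $H(x)$, which is precisely what produces the sharp weights $\tfrac{1+bH}{1-H}$ and $\tfrac{1+\sqrt c}{1-\sqrt c}$ after absorption; and verifying at each step that the integral being transposed across the inequality is finite (guaranteed by the comparability of the three weights above together with $\int_D\inner{B_\lambda^{-1}f,f}_{\omega,h}\,dV_\omega<+\infty$).
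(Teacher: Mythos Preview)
Your proposal is correct and follows the paper's proof essentially verbatim: both take $u$ to be the $L^2(he^{\psi-\phi})$-minimal solution of $\dbar u=f$ obtained via Proposition~\ref{prop cont psh}, observe that $ue^{\psi}$ is then $L^2(he^{-\phi})$-minimal for $\dbar(ue^{\psi})=e^{\psi}(f+u\,\dbar\psi)$, apply Proposition~\ref{prop cont psh} again, use the pointwise Cauchy--Schwarz with variable parameter $r=(1+b)H/(1-H)$, and absorb. The one inaccuracy is your claim that the three weights $e^{-\phi}$, $e^{\psi-\phi}$, $e^{2\psi-\phi}$ are \emph{mutually} comparable --- since $\psi$ is only bounded above you only have $e^{2\psi-\phi}\le Ce^{\psi-\phi}\le C'e^{-\phi}$ --- but this one-sided bound is all that is actually used in the argument.
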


  \begin{proof}

  It follows from   Proposition \ref{prop cont psh} that  there is  $u\in L^2_{(n,0)}(D,E;\loc)$ such that
     $\dbar u=f$ and
   \begin{align*}
   \int_D|u |^2_{\omega,h} e^{-\phi} dV_\omega
  &\leq \int_D \inner{B_{\lambda}^{-1}f,f}_{\omega,h} e^{-\phi} dV_{\omega}.
  \end{align*}
Since $\psi$ is bounded from above on $D$,  we can take $u$ to be the minimal solution to $\dbar u=f$ in $L^2_{(n,0)}(D,E;he^{\psi-\phi},\omega)$, which
 means that $u $ is perpendicular to $\Ker~ \dbar$ in $L^2_{(n,0)}(D,E;he^{\psi-\phi},\omega)$.
Then $ ue^\psi$ is perpendicular
 to $\Ker~ \dbar$ in $L^2_{(n,0)}(D,E;he^{-\phi},\omega)$. Denote  $\beta :=\dbar(ue^\psi)=e^\psi(f+u\dbar \psi)$. By the Cauchy-Schwartz inequality, we get for any $r>0$,
 \begin{align*}
 & \int_D\inner{B_{\lambda}^{-1}\beta,\beta}_{\omega,h}e^{-\phi}dV_\omega \\
  \le &\int_D\inner{B_{\lambda}^{-1} (f+u\dbar \psi),f+u\dbar \psi}_{\omega,h}e^{2\psi-\phi}dV_\omega\\
     \le&  \int_D((1+r)\inner{B_{\lambda}^{-1}f,f}_{\omega,h}
     +(1+\frac{1}{r})H|u|^2_{\omega,h}) e^{2\psi-\phi}dV_\omega<+\infty,
  \end{align*}
  where the second inequality is owing to the fact   $\lambda\ge i H^{-1}\partial\psi\wedge\dbar\psi$. Since $(D,E,h)$ is  {$L^2$-optimal},  by Proposition \ref{prop cont psh}, we can solve $\dbar v= \beta$ in $L^2_{(n,0)}(D,E;he^{-\phi},\omega)$ such that
   \begin{align*}
   \int_D|v |^2_{\omega,h} e^{-\phi} dV_\omega
  &\leq \int_D \inner{B_{\lambda}^{-1}\beta,\beta}_{\omega,h} e^{-\phi} dV_{\omega}.
  \end{align*} Noticing that  $ ue^\psi$ is perpendicular
 to $\Ker~ \dbar$ in $L^2_{(n,0)}(D,E;he^{-\phi},\omega)$, we know that
  it is  the minimal solution to $\dbar v= \beta$  in $L^2_{(n,0)}(D,E;he^{-\phi},\omega)$.
 Therefore,
 \begin{align*}
 \int_D| ue^\psi|^2_{\omega,h}e^{-\phi}dV_\omega     \le &  \int_D\inner{B_{\phi}^{-1}\beta,\beta}_{\omega,h}e^{-\phi}dV_\omega\\
   \le&  \int_D((1+r)\inner{B_{\lambda }^{-1}f,f}_{\omega,h}
     +(1+\frac{1}{r})H|u|^2_{\omega,h}) e^{2\psi-\phi}dV_\omega.
 \end{align*}
  We obtain the first estimate if we take  $r := \frac{(1+b)H}{1-H}$,
   whereas the second one
 follows if we let $b := c^{-1/2}$.
  \end{proof}
\subsection{Complex Green's function on bounded planar domains}\label{ss Green function}

Let $D \subset \mathbb{C}$ be a domain. The \emph{complex Green function} $g_D(z, w)$ for $D$ with pole at $w \in D$ is defined as the  function satisfying:

\begin{enumerate}
    \item $g_D(\cdot, w)$ is harmonic on $D \setminus \{w\}$
    \item $g_D(z, w) \sim   \log |z - w|$ as $z \to w$ (logarithmic singularity)
    \item $g_D(z, w) = 0$ for $z \in \partial D$ (Dirichlet boundary condition)
\end{enumerate}

 Assume $D$ is bounded, then the complex Green function $g_D(z, w)$ exists and is unique. The function
\[
v_w(z) := g_D(z, w) - \log|z - w|
\]
is harmonic on $D$ and bounded by the maximum principle. The \emph{logarithmic capacity} at $w$ is defined as
\[
c_D(w) := \exp\left( \lim_{z \to w} \big( g_D(z, w) - \log|z - w| \big) \right).
\]
 
\begin{example}\label{exa Dr}
  Let $\mathbb{D}_r = \{ z \in \mathbb{C} : |z| < r \}$,  then
\begin{align*}
  g_{\mathbb{D}_r}(z, w) =& \log \left| \frac{r(z - w)}{r^2 - \overline{w}z} \right|\\
  c_{\mathbb{D}_r}(w)= & \frac{r}{r^2-|z|^2}.
\end{align*}
\end{example}

\section{Optimal $L^2$ extension}\label{ss opt l2 ext}

In this section, we obtain an optimal $L^2$-extension theorem for continuous $L^2$-optimal metrics on planar domains, adapting B\l ocki's method \cite{Blocki13} with minor modifications.

 \begin{theorem}[=Theorem \ref{thm optimal l2 ext}]
 Let $D\subset\CC$ be a bounded domain, $E$  a trivial holomorphic vector bundle and $h$ a continuous Hermitian metric on $E$.  If $(D,E,h)$ is $L^2$-optimal, then for any $w \in D$ and $s \in E_w$, there exists $f \in H^0(D,E)$ with $f(w) = s$ satisfying
$$\int_D |f|^2_h  d\lambda_1 \leq \frac{\pi |s|^2_{h(w)}}{c_D(w)^2},$$
where $d\lambda_1$ is the Lebesgue measure of $\mathbb{C}$ and $c_D$ is the logarithmic capacity of $D$.
\end{theorem}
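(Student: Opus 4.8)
The plan is to follow B\l ocki's proof of the optimal Ohsawa--Takegoshi extension from a point \cite{Blocki13} closely, using the $L^2$-optimality of $(D,E,h)$ --- through Theorem \ref{thm L2 estimate by minimal solution} --- as the substitute for the H\"ormander--Nakano estimate that a metric with semipositive curvature would supply. Since $E$ is trivial, fix the constant section $\sigma$ with $\sigma(w)=s$, and let $G:=g_D(\cdot,w)$ be the complex Green function. Because $v_w=G-\log|z-w|$ is harmonic and bounded, say $|v_w|\le M$, the inequality $G<t$ forces $|z-w|<e^{t+M}$, so the sublevel sets $D_t:=\{G<t\}$ are relatively compact in $D$ for $t$ negative enough and shrink to $\{w\}$ as $t\to-\infty$; recall also $e^{v_w(w)}=c_D(w)$. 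For a parameter $t_0\to-\infty$ one takes a cutoff $\chi=\chi_{t_0}(G)$ that equals $1$ on a neighbourhood of $w$ inside $D_{t_0}$ and $0$ off $D_{t_0}$, sets $f_0:=\chi\,\sigma$ (so $f_0(w)=s$ and $\dbar f_0=\chi'(G)\,\dbar G\cdot\sigma$ is supported in a shell $\{G\approx t_0\}$ shrinking to $w$), solves $\dbar(u\,dz)=\dbar f_0\wedge dz$ in the class of $E$-valued $(1,1)$-forms, and puts $f:=f_0-u$. Then $f$ is a holomorphic section, and the two things to establish are $u(w)=0$ (so $f(w)=s$) and the $L^2$ bound.

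For the estimate one applies Theorem \ref{thm L2 estimate by minimal solution} with $\phi$ and $\psi$ chosen as functions of $G$, following B\l ocki: $\phi:=\nu(G)+\eps\rho$ with $\rho$ a fixed bounded strictly plurisubharmonic function, $\eps>0$ small, and $\nu$ a smooth convex function on $(-\infty,0)$ chosen so that $\phi$ is bounded, $i\ddbar\phi$ carries no mass at $w$, and $i\ddbar\phi\ge\lambda:=\nu''(G)\,i\partial G\wedge\dbar G+\eps\,i\ddbar\rho$ (a continuous positive $(1,1)$-form); and $\psi:=\gamma(G)$ with $\gamma$ bounded above and $C^{1,1}$, so that $i\partial\psi\wedge\dbar\psi=\gamma'(G)^2\,i\partial G\wedge\dbar G\le H\lambda$ with $H=\gamma'(G)^2/\nu''(G)$, which one arranges to be arbitrarily small on $\supp\dbar f_0$ (so the factor $\frac{1+\sqrt{H}}{1-\sqrt{H}}$ in Theorem \ref{thm L2 estimate by minimal solution} tends to $1$). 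Since $\lambda$ is a positive multiple of $i\partial G\wedge\dbar G$ on the shell while $\dbar f_0\wedge dz$ is a multiple of $\dbar G\wedge dz$, the integrand $\inner{B_\lambda^{-1}(\dbar f_0\wedge dz),\dbar f_0\wedge dz}_{\omega,h}$ reduces to $\chi'(G)^2\,|\sigma|^2_h/\nu''(G)$ up to a universal constant, the gradient of $G$ cancelling; the coarea formula together with the flux identity $\int_{\{G=t\}}\partial_nG\,ds=2\pi$ then turns the right-hand side of Theorem \ref{thm L2 estimate by minimal solution} into a one-dimensional integral over the shell, which is localized near $w$, where $|\sigma|^2_h\to|s|^2_{h(w)}$ by continuity of $h$ and $e^{-2G}\sim c_D(w)^{-2}|z-w|^{-2}$ produces the factor $c_D(w)^{-2}$. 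B\l ocki's choice of the triple $(\nu,\gamma,\chi)$ --- the extremizers of the associated one-dimensional variational problem --- is exactly what makes this integral equal to $\pi\,|s|^2_{h(w)}/c_D(w)^2$ in the limit $t_0\to-\infty$, $\eps\to0$, after optimizing $\chi$ by Cauchy--Schwarz.

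It remains to force $u(w)=0$ and to pass to the limit. The weight $e^{2\psi-\phi}$ of a bounded $\phi$ cannot by itself vanish non-integrably at $w$, so one proceeds as in the proof of Proposition \ref{prop cont psh}: run the estimate with $\phi$ replaced by $\phi+2\max(G,-k)$ --- still bounded, with the same $\lambda$, and, since $\supp\dbar f_0$ avoids $w$, the same right-hand side once $k$ is large --- and let $k\to\infty$, using the continuity of $h$, Montel's theorem and Fatou's lemma to recover the weight with the genuinely singular factor $e^{-2G}\sim|z-w|^{-2}$. As $\dbar f_0\equiv0$ near $w$, the solution $u$ is holomorphic there with $\int_{|z-w|<\delta}|u|^2_h\,|z-w|^{-2}\,d\lambda_1<\infty$, whence $u(w)=0$, so $f(w)=s$. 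Arranging in addition that the governing weight dominates $d\lambda_1$ on $D$, and noting that $\int_{D_{t_0}}\chi(G)^2|\sigma|^2_h\,d\lambda_1\to0$ because the Lebesgue measure of $D_{t_0}$ tends to $0$, one obtains $\int_D|f|^2_h\,d\lambda_1\le\pi\,|s|^2_{h(w)}/c_D(w)^2+o(1)$ as $t_0\to-\infty$; a normal families argument extracts a holomorphic limit $f\in H^0(D,E)$ with $f(w)=s$, and Fatou's lemma gives $\int_D|f|^2_h\,d\lambda_1\le\pi\,|s|^2_{h(w)}/c_D(w)^2$.

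The main obstacle is the sharpness of the constant. The plain H\"ormander-type estimate of Proposition \ref{prop cont psh} already solves the $\dbar$-equation, but in this extension problem with a loss, so that it only yields some constant strictly larger than $\pi$; recovering the optimal $\pi$ is precisely what forces one to use the twisted estimate of Theorem \ref{thm L2 estimate by minimal solution} together with B\l ocki's careful choice of the auxiliary functions $(\nu,\gamma,\chi)$, so that the Cauchy--Schwarz optimization of the cutoff and the loss $\frac{1+\sqrt{H}}{1-\sqrt{H}}$ both become asymptotically tight. A secondary, but genuine, difficulty is the interplay between the boundedness of $\phi$ that Theorem \ref{thm L2 estimate by minimal solution} requires and the singular weight needed to pin $f(w)=s$, which is why the truncation-and-limit argument of Proposition \ref{prop cont psh} is invoked; here the continuity of $h$ enters decisively, both in that limiting argument and in localizing the extremal constant to $|s|^2_{h(w)}$. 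The remaining ingredients --- the flux/coarea bookkeeping and the final normal families limit --- are routine.
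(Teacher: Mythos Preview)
Your proposal correctly identifies the strategy---B\l ocki's method, with Theorem~\ref{thm L2 estimate by minimal solution} standing in for the curvature-based H\"ormander estimate---and the overall architecture (weights built from the Green function, the B\l ocki ODE for the extremal pair, a singular factor to force $u(w)=0$, and a cascade of limits) matches the paper.

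There is, however, a genuine gap in how you handle the singularity of $G$ at $w$. You set $\phi=\nu(G)+\eps\rho$ and $\lambda=\nu''(G)\,i\partial G\wedge\dbar G+\eps\,i\ddbar\rho$, calling $\lambda$ a continuous positive $(1,1)$-form. But Theorem~\ref{thm L2 estimate by minimal solution}, through Proposition~\ref{prop cont psh} and Richberg's approximation, genuinely requires a continuous positive $\lambda$ on all of $D$, and with B\l ocki's convex profile (which you need for the optimal constant) one has $\nu''(t)\sim 1/t^2$ while $i\partial G\wedge\dbar G\sim |z-w|^{-2}\sim e^{-2G}$ near the pole, so $\nu''(G)\,i\partial G\wedge\dbar G\to\infty$ at $w$. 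Your $\eps\rho$ term supplies strict plurisubharmonicity but does nothing to tame this blow-up. The paper fixes this by replacing $G$ with the smooth strictly plurisubharmonic regularization $G_\delta=v_w+\tfrac12\log(|z-w|^2+\delta^2)-\mathrm{const}$, so that $i\partial G_\delta\wedge\dbar G_\delta$ is bounded, and then sends $\delta\to0$ only at the end; it also needs a further approximation $\lambda_j\nearrow\lambda$ (equivalently $H_j\searrow H$) because B\l ocki's piecewise profile makes $\eta''$ discontinuous at the gluing level $t=M$, so that even after the $G_\delta$ step the candidate $\lambda$ is only piecewise continuous. Both regularizations are absent from your sketch, and the first is not optional: the $L^2$-optimal hypothesis is stated for $D$, not for $D\setminus\{w\}$, so you cannot simply excise the pole. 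A minor side remark: in your one-dimensional reduction, the coarea formula produces $\int_{\{G=t\}}|\nabla G|^{-1}\,ds\approx 2\pi e^{2t}/c_D(w)^2$, not the flux $\int_{\{G=t\}}\partial_nG\,ds=2\pi$; the paper sidesteps this by making the cutoff a function of $-\log|z-w|^2$ rather than of $G$, so that the reduction is plain polar coordinates.
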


\begin{proof}
Without loss of generality, we may assume that $w=0\in D$ and $s$ is a global section of $E$ on $D$.
  Fix   $0<\eps,\sigma\ll 1$, and define
  \begin{equation}\label{define alpha}
    \alpha=\alpha_{\eps,\sigma}:=s\dbar (\chi_{\eps,\sigma}(-\log|z|^2))dz=\frac{s\chi_{\eps,\sigma}'(-\log|z|^2)}{\bar z}dz\wedge d\bar z
  \end{equation}
 where $\chi_{\eps,\sigma}\in C^{0,1}(\RR)$ is non-decreasing and such that
 $\chi_{\eps,\sigma} =0$ on $\{t \le -2\log \eps\}$, $\chi_{\eps,\sigma} =1$ on $\{t \ge -2\log (\sigma\eps)\}$ (it will be precisely determined later).
 We want to solve  $\dbar u= \alpha$ under  appropriate weights $\phi$, $\psi$.

 Take  the following two functions on $\RR^+$:
 \begin{align*}
    \tau(t) & := -\log (t +e^{-t} -1), \\
   \rho(t) & :=-\log (t +e^{-t} -1)+\log(1-e^{-t}).
 \end{align*}

One can check that they are decreasing, convex,
 \begin{equation}\label{define tau rho}
   \left(1-\frac{(\rho')^2}{\tau''}\right)e^{2\rho-\tau+t}=1.
 \end{equation}
 In addition, $\frac{(\rho')^2}{\tau''}$ is strictly increasing, $\frac{(\rho')^2}{\tau''}(0)=\frac{1}{2}$ and $\lim\limits_{t\to+\infty}\frac{(\rho')^2}{\tau''}(t)=1$,
and
 \begin{equation}\label{property tau rho}
   \lim\limits_{t\to+\infty}\tau(t)-2\rho(t)+\log(-\tau'(t))=0.
 \end{equation}

 Let $G(z):=g_D(z,0)$ be the complex Green function for $D$ with pole at $0$, then
 \begin{equation}\label{Green function}
   v(z):=G(z)-\log|z|
 \end{equation}
  is a bounded harmonic function and
$c_D(0)=e^{v(0)}$ is the logarithmic capacity at $0$. For $\delta\in(0,\sigma\eps)$,
  we define $$G_\delta(z):=v(z)+\frac{1}{2}\log(|z|^2+\delta^2)-\sup_{z\in \partial D}\log(1+\frac{\delta^2}{|z|^2}).$$
 Then $G_\delta$ is smooth and strictly plurisubharmonic. Since $G(z)\equiv0$ on $\partial D$, by the maximum principle, we have
 \begin{equation}\label{G_gamma}
   \sup_{z\in D} G_\delta(z)=\sup_{z\in \partial D}G_\delta(z)\le -\frac{1}{2}\sup_{z\in \partial D}\log(1+\frac{\delta^2}{|z|^2})<0.
 \end{equation}
 Moreover, there are positive constants $C_0, C_1$ independent of $\delta$ and $\eps$ such that
 $$\left|2G_\delta(z)-\log(|z|^2+\delta^2)\right| \le C_0 {\textup {~on~}} D$$
 and
 $$\left|2\frac{\partial G_\delta}{\partial z}(z)- \frac{\bar z}{|z|^2+\delta^2}\right|\le C_1{\textup{~near~the~origin~}}0.$$
 Let $M :=-\log(2\eps^2)-C_0$, then $\supp\alpha\subset\{-2G_\delta>M\}$. We define
 \begin{equation*}
   \eta(t):=\left\{
   \begin{aligned}
     &\tau(t), &~~ 0<t<M,\\
     &-c\log(t-M+a)+b, &~~ t\ge M,
   \end{aligned}\right.
 \end{equation*}
 and
 \begin{equation*}
   \gamma(t):=\left\{
   \begin{aligned}
     &\rho(t), &~~ 0<t<M,\\
     &-c\log(t-M+\tilde a)+\tilde b, &~~ t\ge M,
   \end{aligned}\right.
 \end{equation*}
 where $c=M^{-1/2}$ and $a, b, \tilde{b}$ are chosen in such a way that $\eta, \rho\in C^{1,1}(\RR^+)$.
 Then \begin{align*}
        a =&a(\eps)=-\frac{c}{\tau'(M)}, \\
        b = &b(\eps)=\tau(M)+c\log a, \\
        \tilde{a}=&\tilde a(\eps)=-\frac{c}{\rho'(M)},\\
        \tilde b = &\tilde b(\eps)=\rho(M)+c\log \tilde a.
      \end{align*}
    One can check that $\tau'\le\rho'\le\tfrac{1}{2}\tau'<0$, then $2a\ge\tilde a\ge a$. In addition, $c=c(\eps)=M^{-1/2}$ is selected so that
      $$\lim\limits_{\eps\to 0}c(\eps)=0, ~~~~\lim\limits_{\eps\to 0}a(\eps)=+\infty.$$

 Now we take
$$\phi:=2G_\delta+\eta(-2G_\delta),\quad\quad \psi:=\gamma(-2G_\delta),$$
then $\phi\in C^{1,1}(\overline D)$ is strictly plurisubharmonic on $D$ and $\psi\in C^{1,1}(\overline D)$.
We have
\begin{align*}
  i\partial\psi\wedge\dbar\psi&=i(\gamma'(-2G_\delta))^2 \partial(-2G_\delta)\wedge\dbar(-2G_\delta),\\
  i\ddbar\phi&\ge i\eta''(-2G_\delta)\partial(-2G_\delta)\wedge\dbar(-2G_\delta)=:\lambda.
\end{align*}
Consider
\begin{equation*}
  0<H=\frac{(\gamma'(-2G_\delta))^2 }{\eta''(-2G_\delta)}=\left\{\begin{aligned}
    &\frac{(\rho'(-2G_\delta))^2}{\tau''(-2G_\delta)}<1\quad &{\textup{ on~}} \{-2G_\delta\le M\} ,\\
    &c\left(1-\frac{\tilde a-a}{-2G_\delta-M+\tilde a}\right)^2<c \quad &{\textup{ on~}} \{-2G_\delta> M\}.
  \end{aligned}\right.
\end{equation*}
Note that $\frac{(\rho')^2}{\tau''}$ is strictly increasing and $\lim\limits_{t\to+\infty}\frac{(\rho')^2}{\tau''}(t)=1$,
and $c\left(1-\frac{\tilde a-a}{t-M+\tilde a}\right)^2$ is strictly increasing.
Hence we can find a  sequence $\{\theta_j\}$ of continuous functions on $\RR^+$ decreasing converging to $\frac{(\gamma')^2}{\eta''}$ such that  $\theta_j=\frac{(\gamma')^2}{\eta''}$ on  $\{t\le M\}\cup\{t\ge M+\frac{1}{j}\}$ and $\theta_j$ is decreasing on $\{M<t<M+\frac{1}{j}\}$.
Then $\frac{(\gamma')^2}{\theta_j}$ is continuous and  increasing converging to $\eta''$ with $\frac{(\gamma')^2}{\theta_j}=\eta''$ on  $\{t\le M\}\cup\{t\ge M+\frac{1}{j}\}$.
 Take $H_j:=\theta_j(-2G_\delta)$ and $$\lambda_j:=i\frac{(\gamma'(-2G_\delta))^2}{\theta_j(-2G_\delta)}\partial(-2G_\delta)\wedge\dbar(-2G_\delta),$$ then  $i\partial\psi\wedge\dbar\psi= iH_j\lambda_j$, $\lambda_j\le\lambda\le i\ddbar\phi$.

Since  the function $-2c\log(t-M+\tilde a)+c\log(t-M+a)+t$ is increasing in $t$  for sufficiently small $\eps$, together with \eqref{define tau rho}, we have
\begin{equation*}
  (1- H_j)e^{2\psi-\phi}\left\{
  \begin{aligned}
    &=(1- H)e^{2\psi-\phi}=1\quad&{\textup{on~}}\{-2G_\delta< M\},\\
    &\ge (1-H_j(M))e^{2\rho(M)-\tau(M)+M}=1\quad&{\textup{on~}}
    \{-2G_\delta\ge M\}.
  \end{aligned}\right.
\end{equation*}

Assume that \begin{align}\label{for claim}
               \int_D\inner{B^{-1}_{\lambda} \alpha,\alpha}_{\omega,h}\frac{1+bH}{1-H}e^{2\psi-\phi}dV_{\omega} <+\infty.
            \end{align}
 Notice that $c\lambda\le c\le \lambda_j$ and $\sup_{z\in D}H_j=\theta_j(M)=\frac{(\rho')^2}{\tau''}(M)<1$, then
Theorem \ref{thm L2 estimate by minimal solution} now gives a solution $u_{\eps,\sigma,\delta,j}$ of $\dbar u_{\eps,\sigma,\delta,j}=\alpha$  such that for any $b>0$
\begin{align*}
 \int_D|u_{\eps,\sigma,\delta,j}|^2_{\omega,h} dV_\omega &\le  \int_D|u_{\eps,\sigma,\delta,j}|^2_{\omega,h} (1- H_j)e^{2\psi-\phi}dV_\omega \\
   & \le(1+\frac{1}{b})\int_D\inner{B^{-1}_{\lambda_j} \alpha,\alpha}_{\omega,h}\frac{1+bH_j}{1-H_j}e^{2\psi-\phi}dV_{\omega}
\end{align*}
  Moreover,  we can take a sequence $j_k\to0$ such that $u_{\eps,\sigma,\delta,j_k}$ is compactly convergent to a limit $u_{\eps,\sigma,\delta}$ on $D$. Then  $\dbar u_{\eps,\sigma,\delta}=f$ on $D$ and by the dominated convergence theorem, we have
\begin{align}\label{integral for u}
 \int_D|u_{\eps,\sigma,\delta}|^2_{\omega,h} dV_\omega &\le \int_D|u_{\eps,\sigma,\delta}|^2_{\omega,h}  (1- H)e^{2\psi-\phi} dV_\omega\nonumber\\
   & \le(1+\frac{1}{b})\int_D\inner{B^{-1}_{\lambda} \alpha,\alpha}_{\omega,h}\frac{1+bH}{1-H}e^{2\psi-\phi}dV_{\omega}.\nonumber\\
   &\le\frac{1+\sqrt{c}}{1-\sqrt{c}}\int_{\supp\alpha}\inner{B^{-1}_{\lambda} \alpha,\alpha}_{\omega,h}e^{2\psi-\phi}dV_{\omega}\nonumber\\
   &\le\frac{1+\sqrt{c}}{1-\sqrt{c}}\int_{\{\sigma\eps\le|z|\le\eps\}}\frac{(\chi_{\eps,\sigma}'(-2\log|z|))^2|s|^2_he^{2\psi-\phi}}{4\left|\tfrac{\partial G_\delta}{\partial z}(z)\right|^2|z|^2\eta''(-2G_\delta)}idz\wedge d\bar z,
\end{align}
where the second inequality is owing to the fact that $H\le c$ on $\supp \alpha$ and by taking $b=c^{-1/2}$, and the last inequality is due to the fact that on $\supp \alpha\subset\{-2G_\delta> M\}$,
 \begin{align*}
     \inner{B^{-1}_{\lambda} \alpha,\alpha}_{\omega,h}dV_{\omega}\le \frac{(\chi_{\eps,\sigma}'(-2\log|z|))^2|s|^2_{h}idz\wedge d\bar z}{4\left|\tfrac{\partial G_\delta}{\partial z}(z)\right|^2|z|^2\eta''(-2G_\delta)}.
    \end{align*}

Therefore, it suffices to estimate an upper bound for the integral on the right-hand side of \eqref{integral for u}.

Note that $$e^{-2G_\delta(z)}=\frac{\sup_{z\in \partial D}(1+\tfrac{\delta^2}{|z|^2})^2}{(|z|^2+\delta^2)e^{2v(z)}},$$
and on $\{\sigma\eps\le|z|\le\eps\}$, we have
$$16\left|\tfrac{\partial G_\delta}{\partial z}(z)\right|^2|z|^4\ge4\left|\tfrac{\partial G_\delta}{\partial z}(z)\right|^2(|z|^2+\delta^2)^2\ge\left||z|-C_1(|z|^2+\delta^2)
\right|^2\ge|z|^2|1-2C_1\eps|$$
(Notice that $\delta\in(0,\sigma\eps)$).
Moreover, since $2a>\tilde a >a$, one can check that $2\gamma-\eta$ is decreasing on $\{t>M\}$, and thus on $\{\sigma\eps\le|z|\le\eps\}$, we have
\begin{align*}
  2\psi-\eta(-2G_\delta)&\le2\gamma(-\log(|z|^2+\delta^2)-C_0)
-\eta(-\log(|z|^2+\delta^2)-C_0) \\
&\le 2\gamma(-\log(|z|^2+\delta^2))
-\eta(-\log(|z|^2+\delta^2))+c\log(2+\tfrac{2C_0}{\tilde a}).
\end{align*}
In addition, since $\eta''$ is decreasing on $\{t>M\}$, we have  $$\eta''(-2G_\delta)\ge\eta''(-\log(|z|^2+\delta^2)+C_0)\ge
(\tfrac{a+C_0}{a+2C_0})^2\eta''(-\log(|z|^2+\delta^2)).$$

Note that $v,h$ is continuous at $0$, then there is a positive constant $C$ independent of $\eps,\sigma,\delta$ such that
\begin{align*}
  &\frac{1+\sqrt{c}}{1-\sqrt{c}}\int_{\{\sigma\eps\le|z|\le\eps\}}\frac{(\chi_{\eps,\sigma}'(-2\log|z|))^2|s|^2_{h}e^{2\psi-\phi}}{4\left|\tfrac{\partial G_\delta}{\partial z}(z)\right|^2|z|^2\eta''(-2G_\delta)}idz\wedge d\bar z  \\
  \le & C|s|^2_{h(0)}e^{-2v(0)}\int_{\{\sigma\eps\le|z|\le\eps\}}\frac{(\chi_{\eps,\sigma}'(-2\log|z|))^2
   e^{(2\gamma
-\eta)\circ(-\log(|z|^2+\delta^2))}}{ |z|^2\eta''(-\log(|z|^2+\delta^2))}idz\wedge d\bar z
\end{align*}

Note that on $\{t> M\}$,  \begin{align}\label{eta''exp(eta-2gamma)}
                            \eta''e^{\eta-2\gamma}=& ce^{b-2\tilde b}(t-M+\tilde a)^{2c}(t-M+a)^{-2-c}
                          \end{align} is decreasing in $t$ for $c$ small enough  and $-\log|z|^2>-\log(|z|^2+\delta^2)$, then we have

\begin{align}\label{integral for u A}
  &\frac{1+\sqrt{c}}{1-\sqrt{c}}\int_{\{\sigma\eps\le|z|\le\eps\}}\frac{(\chi_{\eps,\sigma}'(-\log|z|^2))^2|s|^2_{h}e^{2\psi-\phi}}{4\left|\tfrac{\partial G_\delta}{\partial z}(z)\right|^2|z|^2\eta''(-2G_\delta)}idz\wedge d\bar z  \nonumber\\
  \le & C|s|^2_{h(0)}e^{-2v(0)}\int_{\{\sigma\eps\le|z|\le\eps\}}\frac{(\chi_{\eps,\sigma}'(-\log|z|^2))^2
   e^{(2\gamma
-\eta)\circ(-\log|z|^2)}}{ |z|^2\eta''(-\log|z|^2) } idz\wedge d\bar z\nonumber\\
\le & 2C\pi |s|^2_{h(0)}e^{-2v(0)}\int_{-2\log\eps}^{{-2\log(\sigma\eps)}}\frac{(\chi_{\eps,\sigma}')^2}{
\eta''e^{\eta-2\gamma}}dt.
\end{align}

Now we take
\begin{equation*}
  \chi_{\eps,\sigma}(t):=\left\{
  \begin{aligned}
    &0,\quad& t<-2\log\eps,\\
    &\frac{1}{V_{\eps,\sigma}}\int_{-2\log\eps}^{t}\eta''e^{\eta-2\gamma}ds,\quad& -2\log\eps\le t\le-2\log(\sigma\eps),\\
    &1,\quad& t\ge-2\log(\sigma\eps),\\
  \end{aligned}\right.
\end{equation*}
where $V_{\eps,\sigma}=\int_{-2\log\eps}^{-2\log(\sigma\eps)}\eta''e^{\eta-2\gamma}dt$. Then $V_{\eps,\sigma}$ is decreasing with respect to $\sigma$.

Recall that $2a\ge\tilde{a}\ge a$ and $a=-\frac{c}{\tau'(M)}$, then by \eqref{eta''exp(eta-2gamma)},
  on $-2\log\eps\le t\le-2\log(\sigma\eps)$, we have
 
   $$\eta''e^{\eta-2\gamma}\le {2^{2c}a^{1-c}}e^{\tau(M)-2\rho(M)+\log(-\tau'(M))}(t-M+a)^{-2+c},$$
   
  and  
  
  $$ \eta''e^{\eta-2\gamma} \ge {2^{-2c}a^{1-c}}e^{\tau(M)-2\rho(M)+\log(-\tau'(M))}(t-M+ a)^{-2+c}.$$

  Since  \begin{align*}& \int_{-2\log\eps}^{-2\log(\sigma\eps)}a^{1-c}(t-M+a)^{-2+c}dt\\=&
  \frac{a^{1-c}}{1-c}(t-M+a)^{-1+c}\Big|_{-2\log(\sigma\eps)}^{-2\log\eps}\\
  =&\frac{a^{1-c}}{1-c}\left((\log2+C_0+a)^{-1+c}-(-2\log\sigma+\log2+C_0+a)^{-1+c}\right)
  \end{align*}

  Recall that $\lim_{\eps\to 0}c=0$, $\lim_{\eps\to 0}a=+\infty$ and $\lim_{\eps\to 0}M=+\infty$, then by \eqref{property tau rho}, we have
\begin{align}\label{integral for u B}
  \lim_{\eps\to 0} V_{\eps}:=\lim_{\eps\to 0}\left(\lim_{\sigma \to 0} V_{\eps,\sigma}\right)=1.
 \end{align}

In summary, by \eqref{integral for u}, \eqref{integral for u A} and \eqref{integral for u B}, for any $\eps,\sigma$ small enough, we obtain that
\begin{align*}
  \int_D|u_{\eps,\sigma,\delta}|^2_{\omega,h} dV_\omega &\le \int_D|u_{\eps,\sigma,\delta}|^2_{\omega,h}  (1- H)e^{2\psi-\phi} dV_\omega\\&\le  \frac{1+\sqrt{c}}{1-\sqrt{c}} \int_{\{\sigma\eps\le|z|\le\eps\}}\frac{(\chi_{\eps,\sigma}'(-\log|z|^2))^2|s|^2_he^{2\psi-\phi}}{4\left|\tfrac{\partial G_\delta}{\partial z}(z)\right|^2|z|^2\eta''(-2G_\delta)}idz\wedge d\bar z  \\
   & \le 2C\pi |s|^2_{h(0)}e^{-2v(0)}\int_{-2\log\eps}^{{-2\log(\sigma\eps)}}\frac{(\chi_{\eps,\sigma}')^2}{
\eta''e^{\eta-2\gamma}}dt\\
&\le \frac{2C\pi |s|^2_{h(0)}e^{-2v(0)}}{V_{\eps,\sigma}^2}\int_{-2\log\eps}^{{-2\log(\sigma\eps)}}\eta''e^{\eta-2\gamma}dt \\
&\le  4C\pi|s|^2_{h(0)}e^{-2v(0)}.
\end{align*}

Then  we can take a sequence $\delta_j\to0$ such that $u_{\varepsilon,\sigma,\delta_j}$ is compactly convergent to a limit $u_{\eps,\sigma}$ on $D$. Then  $\dbar u_{\eps,\sigma}=\alpha_{\eps,\sigma}$ on $D$ and by Fatou's lemma,  we have

\begin{align*}
 & \int_D|u_{\eps,\sigma}|^2_{\omega,h} dV_\omega \\\le& \int_D|u_{\eps,\sigma}|^2_{\omega,h}  \left(1- \frac{(\gamma'(-2G))^2}{\eta''(-2G)}\right)e^{2\gamma(-2G)-\eta(-2G)-2G} dV_\omega\\ \le& \liminf_{\delta_j\to0}\frac{1+\sqrt{c}}{1-\sqrt{c}}\int_{\{\sigma\eps\le|z|\le\eps\}}
 \frac{(\chi_{\eps,\sigma}'(-\log|z|^2))^2|s|^2_he^{2\psi-\phi}}{4\left|\tfrac{\partial G_{\delta_j}}{\partial z}(z)\right|^2|z|^2\eta''(-2G_{\delta_j})}idz\wedge d\bar z  \\
  \le & \frac{1+\tfrac{2\sqrt{c}}{1-\sqrt{c}}}{|1-2C_1\eps|}\cdot\left(2+\frac{2C_0}{\tilde a}\right)^c\cdot\left(\frac{a+2C_0}{a+C_0}\right)^2\int_{\{\sigma\eps\le|z|\le\eps\}}
   \frac{\eta''e^{\eta-2\gamma}|s|^2_h}{{V_{\eps,\sigma}^2}|z|^2e^{2v }}idz\wedge d\bar z
    \\ :=&C_\eps \int_{\{\sigma\eps\le|z|\le\eps\}}
   \frac{ \eta''e^{\eta-2\gamma}|s|^2_h}{{V_{\eps,\sigma}^2}|z|^2e^{2v }}idz\wedge d\bar z.
\end{align*}
 Note that $\lim_{\eps\to0}C_\eps=1$,
then $\int_D|u_{\eps,\sigma}|^2_{\omega,h} dV_\omega$ is uniformly bounded with respect to $\eps,\sigma$. Hence  we can take a sequence $\sigma_j\to0$ such that $u_{\varepsilon,\sigma_j}$   converge weakly to limits $u_{\varepsilon}$  on $D$. Moreover, by Mazur's theorem, there is a sequence $\{\widetilde{ u_{\eps,\sigma_j}}\}$ such that $\widetilde{ u_{\eps,\sigma_j}}$ strongly converges to $u_\eps$ where each $\widetilde{ u_{\eps,\sigma_j}}$ is a convex combination of
	$\{u_{\eps,\sigma_k} \}_{k\ge j}$. Write $\widetilde{ u_{\eps,\sigma_j}}=\sum_{m=1}^{N}a_mu_{\eps,\sigma_{k_m}}$ for $a_m>0$ with $\sum_{m=1}^{N}a_m=1$, then by the Cauchy-Schwarz inequality, we have
\begin{align*}
  &\int_{D }|\widetilde{ u_{\eps,\sigma_j}}|^2_{\omega,h} dV_\omega \\
  \le& \int_D|\widetilde{ u_{\eps,\sigma_j}}|^2_{\omega,h}  \left(1- \frac{(\gamma'(-2G))^2}{\eta''(-2G)}\right)e^{2\gamma(-2G)-\eta(-2G)-2G} dV_\omega\\
 \le &\int_{D }|\sum_{m=1}^{N}a_mu_{\eps,\sigma_{k_m}}|^2_{\omega,h} \left(1- \frac{(\gamma'(-2G))^2}{\eta''(-2G)}\right)e^{2\gamma(-2G)-\eta(-2G)-2G} dV_\omega\\
 \le &\int_{D }\left(\sum_{m=1}^{N}a_m\right)
 \left(\sum_{m=1}^{N}a_m|u_{\eps,\sigma_{k_m}}|^2_{\omega}\right)  \left(1- \frac{(\gamma'(-2G))^2}{\eta''(-2G)}\right)e^{2\gamma(-2G)-\eta(-2G)-2G}dV_\omega\\
 \le&  C_\eps\int_{\{\sigma_j\eps\le|z|\le\eps\}}
   \frac{\eta''e^{\eta-2\gamma}|s|^2_h}{{V_{\eps,\sigma_j}^2}|z|^2e^{2v }}idz\wedge d\bar z.
\end{align*}Therefore,  we have
\begin{align*}
 & \int_D|u_{\eps}|^2_{\omega,h} dV_\omega \\\le& \int_D|u_{\eps}|^2_{\omega,h}  \left(1- \frac{(\gamma'(-2G))^2}{\eta''(-2G)}\right)e^{2\gamma(-2G)-\eta(-2G)-2G} dV_\omega\\ \le& \liminf_{\sigma_j\to0}C_\eps\int_{\{\sigma_j\eps\le|z|\le\eps\}}
   \frac{\eta''e^{\eta-2\gamma}|s|^2_h}{{V_{\eps,\sigma_j}^2}|z|^2e^{2v }}idz\wedge d\bar z  \\
   \le&  C_\eps\int_{\{|z|\le\eps\}}
   \frac{\eta''e^{\eta-2\gamma}|s|^2_h}{{V_{\eps}^2}|z|^2e^{2v }}idz\wedge d\bar z
\end{align*}
Since for any $0<\tilde\eps\le\eps$ \begin{align*}
         &  \int_{|z|\le\tilde\eps}\left(1- \frac{(\gamma'(-2G))^2}{\eta''(-2G)}\right)e^{2\gamma(-2G)-\eta(-2G)-2G}idz\wedge d\bar z \\
         \ge &  (1-c)e^{-C_0}\int_{|z|\le\tilde\eps}\frac{e^{2\gamma(-2G)-\eta(-2G)}}{|z|^2}idz\wedge d\bar z\\
         \ge &2\pi(1-c)e^{-C_0}e^{2\tilde b-b}\int_{-2\log\tilde\eps}^{+\infty}(t-M+\tilde a)^{-2c}(t-M+a)^{c}dt\\
         \ge &2\pi(1-c)e^{-C_0}e^{2\tilde b-b}\int_{-2\log\tilde\eps}^{+\infty}(t-M+\tilde a)^{-c}dt\\
         =&+\infty,
      \end{align*}
we get that $u_{\eps}(0)=0$. Notice that $\dbar u_{\eps}=s\chi_{\eps}(-\log|z|^2)$ on $D$, then $$f_\eps:=s\chi_{\eps}(-\log|z|^2)-\frac{u_{\eps}}{dz}$$    is a holomorphic  function  on $D$ with $f_\eps(0)=s$, where \begin{equation*}
  \chi_{\eps }(t):=\left\{
  \begin{aligned}
    &0,\quad& t<-2\log\eps,\\
    &\frac{1}{V_{\eps }}\int_{-2\log\eps}^{t}\eta''e^{\eta-2\gamma}ds,\quad& -2\log\eps\le t,\\
    &1,\quad& t=+\infty.\\
  \end{aligned}\right.
\end{equation*} Moreover, for any $r>0$,

\begin{align*}
   & \int_D|f_\eps|^2_{h} idz\wedge d\bar z   \\
  \le & (1+\frac{1}{r})\int_D|\chi_{\eps}(-\log|z|^2)|^2|s|^2_{h} idz\wedge d\bar z +(1+r)\int_D|u_{\eps}|^2_{\omega,h}dV_\omega\\
  \le &  (1+\frac{1}{r})\int_{\{|z|\le\eps\}} |s|^2_{h} idz\wedge d\bar z +4(1+ {r})C\pi |s|^2_{h(0)}e^{-2v(0)}.
\end{align*}

Then we can take a sequence $\eps_j\to0$ such that $f_{\varepsilon_j}$ is compactly convergent to a  holomorphic  function  $f$ on $D$ with $f(0)=s$. In addition, by Fatou's lemma, we have

\begin{align*}
  &\int_D|f|^2_{h} idz\wedge d\bar z \\\le & \liminf_{\eps_j\to0} \int_D|f_{\eps_j}|^2_{h} idz\wedge d\bar z  \\\le& \liminf_{\eps_j\to0} (1+\frac{1}{r})\int_{\{|z|\le\eps_j\}} |s|^2_{h} idz\wedge d\bar z+(1+ {r})\int_D|u_{\eps_j}|^2_{\omega,h} dV_\omega\\ \le& \liminf_{\eps_j\to0}  (1+r)C_{\eps_j}\int_{\{ |z|\le\eps_j\}}
   \frac{\eta''e^{\eta-2\gamma}|s|^2_{h}}{V_{\eps_j}^2|z|^2e^{2v}}idz\wedge d\bar z  \\
   \le & (1+r) |s|^2_{h(0)}e^{-2v(0) } \liminf_{\eps_j\to0}\int_{\{ |z|\le\eps_j\}}
   \frac{\eta''e^{\eta-2\gamma}}{|z|^2} idz\wedge d\bar z\\
   \le & 2\pi(1+r) |s|^2_{h(0)}e^{-2v(0)}\liminf_{\eps_j\to0}\int_{-2\log\eps_j}^{+\infty}\eta''e^{\eta-2\gamma} dt\\
    \le & 2\pi(1+r) |s|^2_{h(0)}e^{-2v(0)}.
\end{align*}
  Since $r>0 $ is arbitrary, $v(0)=\log c_D(0)$ and $idz\wedge d\bar z=2d\lambda_1(z)$ we obtain that

  \begin{align*}
  \int_D|f|^2_{h}d\lambda_1(z)
    \le & \frac{\pi |s|^2_{h(0)}}{c_D(0)^2}.
\end{align*}

\end{proof}

\begin{remark}
\begin{enumerate}
  \item   The proof shows that for fixed $w \in D$, continuity of $h$ at $w$ is sufficient.
  \item  For any holomorphic vector bundle $E$ over a Stein manifold $X$ and $x \in X$, there exists an analytic subset $Z \subset X$ with $x \notin Z$ such that $E|_{X \setminus Z}$ is trivial. Since analytic subsets are $L^2$-negligible in the sense of \cite[Chapter VIII-(7.3)]{D12a}, the triviality assumption on $E$ over $D$ is unnecessary.
\end{enumerate}
\end{remark}

\section{Solution to Conjecture \ref{conj DNW-Ina}}\label{ss solution to conj DNW-Ina}

Recall that a singular Hermitian metric $h$ on a holomorphic vector bundle $E$ is said to be Griffiths semi-positive if   $\log|u|^2_{h^*}$ is plurisubharmonic for every local holomorphic section $u$ of $E^*$. Since upper semi-continuous functions are plurisubharmonic if and only if their restrictions to complex lines are subharmonic, we obtain:

\begin{lemma}\label{lem ch Grif line}
  Let $h$ be a singular Hermitian metric on a trivial holomorphic vector bundle $E$ over a domain $D\subset \CC^n$ such that $\log|u|^2_{h^*}$ is upper semi-continuous for any local holomorphic section $u$ of $E^*$. Then $h$ is Griffiths semi-positive  if and only if $h|_{L\cap D}$ is Griffiths semi-positive for any complex line $L\subset \CC^n$.
\end{lemma}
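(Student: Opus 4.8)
The plan is to deduce the lemma directly from the characterization of plurisubharmonicity recalled just before the statement — an upper semi-continuous function is plurisubharmonic precisely when its restriction to every complex line is subharmonic — together with two elementary compatibility facts that I would record first: for an affine complex line $L\subset\CC^n$ the dual of the restricted metric is the restriction of the dual metric, $(h|_{L\cap D})^{*}=h^{*}|_{L\cap D}$ (both are fibrewise operations), and the restriction to $L\cap D$ of a local holomorphic section of $E^{*}$ is a local holomorphic section of $E^{*}|_{L\cap D}$. With these in hand, for a local holomorphic section $u$ of $E^{*}$ over an open set $\Omega\subseteq D$ one has $\log|u|^{2}_{h^{*}}|_{\Omega\cap L}=\log|u|_{\Omega\cap L}|^{2}_{(h|_{L\cap D})^{*}}$, which is exactly what allows the ambient notion of Griffiths semi-positivity and the sliced one to be compared.

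For the implication "$h$ Griffiths semi-positive $\Rightarrow$ $h|_{L\cap D}$ Griffiths semi-positive", I would fix an affine complex line $L$, an open $V\subseteq L\cap D$, and a holomorphic section $u$ of $E^{*}|_{L\cap D}$ over $V$, and show that $\log|u|^{2}_{(h|_{L\cap D})^{*}}$ is subharmonic on $V$. Since subharmonicity is local, I would fix $p\in V$, choose linear coordinates $(z_{1},z')$ with $L=\{z'=0\}$, write $u=(u_{1},\dots,u_{r})$ in the given trivialization of $E^{*}$ with each $u_{i}$ holomorphic in $z_{1}$ near $p$, and extend it \emph{trivially} by $\tilde u(z_{1},z'):=(u_{1}(z_{1}),\dots,u_{r}(z_{1}))$ on a small polydisc $W\ni p$ with $W\subseteq D$. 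Then $\tilde u$ is a local holomorphic section of $E^{*}$ over $W$, so $\log|\tilde u|^{2}_{h^{*}}$ is plurisubharmonic on $W$, and hence its restriction to the disc $W\cap L$ is subharmonic; since that restriction agrees with $\log|u|^{2}_{(h|_{L\cap D})^{*}}$ near $p$ and $p$ was arbitrary, subharmonicity holds on all of $V$, while upper semi-continuity is inherited from that of $\log|\tilde u|^{2}_{h^{*}}$.

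For the converse, given a local holomorphic section $u$ of $E^{*}$ over an open set $\Omega\subseteq D$, the standing hypothesis already grants that $\log|u|^{2}_{h^{*}}$ is upper semi-continuous on $\Omega$; for every affine complex line $L$, $u|_{\Omega\cap L}$ is a local holomorphic section of $E^{*}|_{L\cap D}$, so Griffiths semi-positivity of $h|_{L\cap D}$ forces $\log|u|^{2}_{h^{*}}|_{\Omega\cap L}$ to be subharmonic. The recalled characterization of plurisubharmonicity then upgrades $\log|u|^{2}_{h^{*}}$ to a plurisubharmonic function on $\Omega$, and since $u$ and $\Omega$ are arbitrary, $h$ is Griffiths semi-positive.

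I expect the only genuinely delicate point to be the forward implication: a holomorphic section that lives only on a slice $V\subseteq L\cap D$ need not be the restriction of any holomorphic section over an open subset of $\CC^n$, so one cannot simply invoke Griffiths semi-positivity of $h$ on an ambient neighbourhood of $V$. This is dissolved by the fact that subharmonicity is purely local and that near any point of $L$ there is a tautological slice-independent extension to a polydisc, to which the hypothesis on $h$ does apply; the rest is the bookkeeping that restriction commutes with dualization of metrics and with taking pointwise norms.
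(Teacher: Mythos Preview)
Your proof is correct and follows exactly the approach the paper indicates: the paper presents the lemma as an immediate consequence of the characterization ``upper semi-continuous functions are plurisubharmonic if and only if their restrictions to complex lines are subharmonic'' and gives no further argument, while you have simply spelled out the details (including the local trivial extension needed for the forward implication). There is nothing to add.
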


The following lemma reformulates \cite[Theorem 1.3]{DNWZ22} for singular Hermitian metrics on trivial bundles over planar domains:

\begin{lemma}[{\cite[Theorem 1.3]{DNWZ22}}]\label{lem DNWZ Grif}
   Let $h$ be a singular Hermitian metric on a trivial holomorphic vector bundle $E$ over a domain $D\subset \CC$ such that $\log|u|^2_{h^*}$ is upper semi-continuous for any local holomorphic section $u$ of $E^*$. Then $h$ is Griffiths semi-positive  if and only if $h$ satisfies the optimal $L^2$-extension property:  for any $w\in D$, any $\DD_r(w)\Subset D$ and $s\in E_w$, there is a holomorphic section $f\in H^0(\DD_r(w),E)$ with $f(w)=s$ and
 $$\frac{1}{\pi r^2}\int_{\DD_r(w)}|f|^2_hd\lambda_1(z)\le  |s|^2_{h(w)}.$$
\end{lemma}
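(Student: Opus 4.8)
The statement is the one-dimensional, disk-normalized form of \cite[Theorem 1.3]{DNWZ22}, so my plan splits along the two implications; only the ``Griffiths $\Rightarrow$ extension'' half needs a deep input, and the ``extension $\Rightarrow$ Griffiths'' half (the one used afterwards, combined with Theorem \ref{thm optimal l2 ext}) admits a short self-contained argument which I would spell out.

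\emph{Extension property $\Rightarrow$ Griffiths semi-positivity.} By the definition recalled just above, it suffices to show that $v_u := \log|u|^2_{h^*}$ is subharmonic on $D$ for every local holomorphic section $u$ of $E^*$; since $v_u$ is upper semi-continuous by hypothesis, I would verify the sub-area-mean value inequality $v_u(w) \le \frac{1}{\pi r^2}\int_{\DD_r(w)} v_u\, d\lambda_1$ for every $\DD_r(w)\Subset D$, a classical criterion for subharmonicity of an upper semi-continuous function. Fix such $w$ and $r$ (the case $u(w)=0$ being trivial), and use that $E_w$ is finite dimensional to pick $s\in E_w$ with $|s|_{h(w)}=1$ realizing the dual norm, i.e. $|\langle u(w),s\rangle|^2 = |u(w)|^2_{h^*(w)}$. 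Apply the extension property on $\DD_r(w)$ to get $f\in H^0(\DD_r(w),E)$ with $f(w)=s$ and $\frac{1}{\pi r^2}\int_{\DD_r(w)}|f|^2_h\,d\lambda_1\le 1$. Then $F:=\langle u,f\rangle$ is holomorphic with $|F(w)|^2 = e^{v_u(w)}$, and the fibrewise Cauchy--Schwarz inequality gives $|f|^2_h \ge |F|^2 e^{-v_u}$ (a.e.). Feeding this into Jensen's inequality for the probability measure $\frac{1}{\pi r^2}\,d\lambda_1$ on $\DD_r(w)$ yields
\[
1 \;\ge\; \frac{1}{\pi r^2}\int_{\DD_r(w)}|F|^2 e^{-v_u}\,d\lambda_1 \;\ge\; \exp\!\Big(\frac{1}{\pi r^2}\int_{\DD_r(w)}\big(\log|F|^2 - v_u\big)\,d\lambda_1\Big),
\]
so $\frac{1}{\pi r^2}\int_{\DD_r(w)} v_u\,d\lambda_1 \ge \frac{1}{\pi r^2}\int_{\DD_r(w)}\log|F|^2\,d\lambda_1 \ge \log|F(w)|^2 = v_u(w)$, using the sub-mean value property of the subharmonic function $\log|F|^2$. (Comparing the two displays with Jensen's upper bound $\frac{1}{\pi r^2}\int\log g\,d\lambda_1\le\log 1=0$ also forces $v_u\in L^1_\loc$, so the inequality is not vacuous; and on any component where $u\equiv 0$ there is nothing to prove.) As $u$ is arbitrary, $h$ is Griffiths semi-positive.

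\emph{Griffiths semi-positivity $\Rightarrow$ extension property.} Here I would invoke the Ohsawa--Takegoshi-type optimal $L^2$ extension theorem for Griffiths semi-positive singular Hermitian metrics, \cite[Theorem 1.3]{DNWZ22}, applied on the disk $\DD_r(w)$, and then identify the constant: by Example \ref{exa Dr} the logarithmic capacity of $\DD_r(w)$ at its centre is $c_{\DD_r(w)}(w)=1/r$, so the general optimal constant $\pi/c(w)^2$ specializes to exactly $\pi r^2$, matching the normalization in the statement.

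The two elementary points to handle with care are the passage from the sub-area-mean value inequality to genuine subharmonicity for a merely upper semi-continuous $v_u$ (standard potential theory) and the degenerate cases noted above. The real obstacle, were one to prove the second implication from scratch rather than cite it, is the optimal-constant $L^2$ extension for \emph{singular} Griffiths-positive metrics (where naive smoothing by convolution is known to be delicate); but this is precisely what \cite{DNWZ22} supplies, so in the present paper it is used as a black box.
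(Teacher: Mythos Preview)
The paper does not supply its own proof of this lemma; it records it as a direct reformulation of \cite[Theorem~1.3]{DNWZ22} and uses it as a black box (and in fact only the ``extension $\Rightarrow$ Griffiths'' implication is ever invoked, in the derivation of Theorem~\ref{thm DNW-Ina'}). Your argument for that implication is correct and is exactly the proof in \cite{DNWZ22}: choose $s$ attaining the dual norm, extend optimally on the disk, set $F=\langle u,f\rangle$, combine Cauchy--Schwarz with Jensen and the sub-mean value property of $\log|F|^2$ to get the area-mean inequality for $\log|u|^2_{h^*}$. For the converse direction you appeal to \cite[Theorem~1.3]{DNWZ22} itself, which is slightly circular in phrasing but harmless, since that theorem is stated there as an equivalence and the forward direction relies on an optimal $L^2$ extension theorem for singular Griffiths semi-positive metrics that is established independently; as you correctly observe, this direction plays no role in the present paper.
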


Recall that \cite[Proposition 3.3]{LXYZ24}  established the restriction property: $L^2$-optimal Hermitian metrics remain $L^2$-optimal under restriction to complex hyperplanes. Due to minor differences in the setting, we provide a proof for completeness.

\begin{lemma}[Restriction property, {\cite[Proposition 3.3]{LXYZ24}}] \label{pro nak res nak}
Let $h$ be a continuous Hermitian metric on a trivial holomorphic vector bundle $E$ over a domain $D\subset \CC^n$.
	 Assume that $h$ is $L^2$ optimal, then for any complex hyperplane $H\subset \CC^n$,  $h|_{H\cap D}$ is also $L^2$-optimal.
\end{lemma}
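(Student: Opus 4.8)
The plan is to reduce the statement to a Fubini-type computation on thin tubes lying over relatively compact pieces of $H\cap D$, using the fact that such tubes are Stein open subsets of $D$ and hence already carry the $L^2$-optimal property by hypothesis. Normalize coordinates so that $H=\{z_n=0\}$, put $D':=H\cap D$, and let $\pi\colon\CC^n\to\CC^{n-1}$, $\pi(z',z_n)=z'$, be the projection. By the definition of $L^2$-optimality it suffices to fix a Stein open $V\subset D'$, a $\dbar$-closed $E$-valued $(n-1,1)$-form $g\in L^2_{(n-1,1)}(V,E;\loc)$, a smooth strictly plurisubharmonic $\phi$ and a K\"ahler metric $\omega$ on $V$ with $I:=\int_V\inner{B_{\omega,\phi}^{-1}g,g}_{\omega,h}e^{-\phi}dV_\omega<+\infty$, and to solve $\dbar v=g$ on $V$ with $\int_V|v|^2_{\omega,h}e^{-\phi}dV_\omega\le I$. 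Exhausting $V$ by relatively compact Stein domains $V_1\Subset V_2\Subset\cdots$, compactness of $\overline{V_k}$ gives $r_k>0$ with $\overline{V_k}\times\overline{\DD_{r_k}}\subset D$; shrinking $r_k$ further and using uniform continuity of $h$ on this compact set, I would also arrange $h(z',0)\le(1+\eps_k)h(z',z_n)$ there, with $\eps_k\to 0$. Each tube $T_k:=V_k\times\DD_{r_k}$ is a Stein open subset of $D$, so $(T_k,E|_{T_k},h|_{T_k})$ is $L^2$-optimal.

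On $T_k$ I would apply this to the $\dbar$-closed $E$-valued $(n,1)$-form $G_k:=\pi^*g\wedge dz_n$, the smooth strictly plurisubharmonic weight $\Phi:=\pi^*\phi+|z_n|^2$, and the K\"ahler metric $\omega_{T_k}:=\pi^*\omega+i\,dz_n\wedge d\bar z_n$. Since the complex Hessian of $\Phi$ is block-diagonal with $z_n$-entry $1$ and $G_k$ has no $d\bar z_n$-component, the pointwise quantity $\inner{B_{\omega_{T_k},\Phi}^{-1}G_k,G_k}_{\omega_{T_k},h}$ equals, by the $\omega$-independence noted after the definition, the pullback under $\pi$ of $\inner{B_{\omega,\phi}^{-1}g,g}_{\omega,h}$; hence integrating $z_n$ out against $e^{-|z_n|^2}$ over $\DD_{r_k}$ contributes exactly the factor $\int_{\DD_{r_k}}e^{-|z_n|^2}d\lambda_1=\pi(1-e^{-r_k^2})$, so the right-hand side of the estimate for $T_k$ equals $\pi(1-e^{-r_k^2})\,I_k$ with $I_k:=\int_{V_k}\inner{B_{\omega,\phi}^{-1}g,g}_{\omega,h}e^{-\phi}dV_\omega\le I$; in particular it is finite. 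This yields $U_k$ with $\dbar U_k=G_k$ and $\int_{T_k}|U_k|^2_{\omega_{T_k},h}e^{-\Phi}dV_{\omega_{T_k}}\le\pi(1-e^{-r_k^2})I_k$. Writing $U_k=\tilde u_k\,dz_1\wedge\cdots\wedge dz_n$, the equation $\dbar U_k=G_k$ forces $\pa_{\bar z_n}\tilde u_k=0$, so $\tilde u_k(z',\cdot)$ is holomorphic on $\DD_{r_k}$, while $\pa_{\bar z_j}\tilde u_k$ for $j\le n-1$ is a pullback from $V$; hence $v_k:=\tilde u_k(\cdot,0)\,dz_1\wedge\cdots\wedge dz_{n-1}$ solves $\dbar v_k=g$ on $V_k$.

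The no-loss estimate then comes from the Gaussian-weighted Bergman inequality on the disk: for $F$ holomorphic on $\DD_r$ one has $|F(0)|^2\le\big(\int_{\DD_r}e^{-|z_n|^2}d\lambda_1\big)^{-1}\int_{\DD_r}|F|^2e^{-|z_n|^2}d\lambda_1$, the constant being the reciprocal of the Gaussian mass that appeared above. Applying this fiberwise to $\tilde u_k(z',\cdot)$ with the fixed fiber metric $h(z',0)$, then using $h(z',0)\le(1+\eps_k)h(z',z_n)$ to pass to the varying metric inside the disk integral, and integrating in $z'$, the factor $\pi(1-e^{-r_k^2})$ cancels exactly and I obtain $\int_{V_k}|v_k|^2_{\omega,h}e^{-\phi}dV_\omega\le(1+\eps_k)I_k\le(1+\eps_k)I$. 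Finally, arguing as in the proof of Proposition \ref{prop cont psh}: the $v_k$ are uniformly $L^2$-bounded, the differences $v_k-v_{k_0}$ over a fixed $V_{k_0}$ are holomorphic, so a subsequence converges locally uniformly on $V$ to a solution $v$ of $\dbar v=g$; letting $k_0\to\infty$ and using Fatou's lemma gives $\int_V|v|^2_{\omega,h}e^{-\phi}dV_\omega\le\liminf_k(1+\eps_k)I=I$, which is what is needed.

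I expect the main obstacle to be twofold. The conceptual point is the observation that the tubes $V_k\times\DD_{r_k}$ are Stein open subsets of $D$, so that the hypothesis — which, as stated, quantifies over all Stein coordinate neighborhoods — applies to them directly; this is what lets one work locally over $D'$ without re-proving $L^2$-optimality on a subdomain from scratch. The technical point is that $h$ is merely continuous: it enters both in the $(1+\eps_k)$-comparison between $h(z',z_n)$ and $h(z',0)$, controlled by shrinking $r_k$ via uniform continuity on the compact tube, and in the concluding normal-families/Fatou limit, exactly as in Proposition \ref{prop cont psh}. Getting the final constant to be exactly $1$ is the reason for the specific Gaussian weight $e^{-|z_n|^2}$: its reproducing constant on $\DD_r$ is precisely the reciprocal of its mass there, so all dependence on the auxiliary radii $r_k$ drops out of the bound.
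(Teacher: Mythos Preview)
Your argument is correct and shares the paper's overall architecture---normalize $H=\{z_n=0\}$, thicken a relatively compact Stein piece of $H\cap D$ to a Stein tube inside $D$, pull back the data by the projection, add a strictly plurisubharmonic $z_n$-weight, invoke the assumed $L^2$-optimality on the tube, and pass to a limit---but the step that pushes the tube solution down to the slice is genuinely different. The paper first regularizes $f$ so that the tube solution $v_\eps$ can be taken smooth, then uses Fubini to locate \emph{some} height $\xi_\eps\in\DD_\eps$ where the slice integral of $|v_\eps|^2_h e^{-\phi_\eps}$ is at most the tube average, restricts there, and only afterwards uses continuity of $h$ to replace $h(\cdot,\xi_\eps)$ by $h(\cdot,0)$ in the limit $\eps\to 0$. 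You instead observe that $G_k$ has no $d\bar z_n$-component, so the solution is holomorphic in $z_n$, and then apply the sharp Gaussian-weighted Bergman inequality on $\DD_{r_k}$ (whose extremal is the constant function, so the constant is exactly the reciprocal Gaussian mass) to evaluate directly at $z_n=0$; continuity of $h$ enters earlier, as the multiplicative $(1+\eps_k)$ comparing $h(z',0)$ to $h(z',z_n)$ on the compact tube. Your route is tidy in that the auxiliary factor $\pi(1-e^{-r_k^2})$ cancels by design and the slice is always $\{z_n=0\}$, but it needs one sentence you omit: why $\tilde u_k(\cdot,0)$ is well-defined for a merely $L^2$ solution. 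This is harmless, since $\tilde u_k$ differs from any fixed $L^2_{\loc}$ solution $u_0$ of $\dbar u_0=g$ on $V_k$ (pulled back, constant in $z_n$) by a function holomorphic on the whole tube, hence smooth. The paper's Fubini device sidesteps this by working with smooth data from the outset.
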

\begin{proof}Without loss of generality, we may assume $H=\{(z',z_n)\in \CC^n;z_n=0\}$.
 Let $U\subset H\cap D$ be a Stein open subset,  $\omega$  a K\"{a}hler metric, $\phi$ a smooth strictly plurisubharmonic function on $U$ and $$f(z')=\sum_{j=1}^{n-1}f_j(z')d\bar z_j\in L^2_{(n-1,1)}(U, E; \loc)$$ a $\dbar$-closed $E$-valued $(n-1,1)$-form   such that
   \begin{equation*}
   \int_U \inner{B_{\omega,\phi}^{-1}f,f}_{\omega,h} e^{-\phi} dV_{\omega}<+\infty.
  \end{equation*}
Since we can take a Stein exhaustion of $U$ and regularize $f$ by convolutions, we may assume that $U\Subset H\cap D$ and $f$ is smooth.    Then for $\eps$ small enough, $$U_\eps:=U\times \DD_\eps=\{(z',z_n)\in \CC^n; z'\in U,|z_n|<\eps\}$$ is a Stein subset of $D$. In addition, $\widetilde\omega=\omega+\frac{i}{2}dz_n\wedge d\bar z_n$ is a K\"{a}hler metric on $U_\eps$, $\phi_\eps:=\phi+\eps|z_n|^2$ is a smooth strictly plurisubharmonic function on $U_\eps$ and $$\widetilde f(z',z_n)=\sum_{j=1}^{n-1}f_j(z',0)\wedge dz_n\wedge d\bar z_j$$ is a $\dbar$-closed  $E$-valued $(n,1)$-form on $U_\eps$.

Notice that $h$ is continuous and $$\langle B^{-1}_{\widetilde\omega,\phi_\eps} \widetilde f ,\widetilde f  \rangle_{\widetilde\omega} =\langle B^{-1}_{\omega,\phi}   f ,  f  \rangle_{ \omega},  $$

then  we obtain \begin{equation} \label{eq aaa}
  \lim_{\eps \to 0}\frac{1}{\pi \eps^2}\int_{U_{\eps}} \langle B^{-1}_{\widetilde\omega,\phi_\eps} \widetilde f ,\widetilde f  \rangle_{\widetilde\omega,h}e^{-\phi_\eps}dV_{\widetilde\omega}=\int_{U }\langle B^{-1}_{\omega,\phi}   f ,  f  \rangle_{ \omega,h}e^{-\phi}dV_{\omega}.
 \end{equation}

Since $h$ is $L^2$-optimal,
 there exists  $v_{\eps}\in L^2_{n,0}(U_{\eps},E;\loc)$ such that $\dbar v_{\eps}=\widetilde f$ and
     \begin{align*}
    \int_{U_{\eps}} |v_{\eps}|^2_{\omega,h} e^{-\phi_\eps} dV_{\widetilde\omega}
    \le \int_{U_{\eps}} \langle B^{-1}_{\omega,\phi_\eps}\widetilde f,\widetilde f \rangle_{\widetilde\omega,h} e^{-\phi_\eps} dV_{\widetilde\omega}.
 \end{align*}
 Moreover, by the weak regularity of $\dbar$ on $(n,0)$-forms, we can take $v_\eps$ to be smooth.
Using the Fubini-Tonelli theorem, we know that for any $\eps>0$, there exists  $\xi_\eps\in \DD_\eps$ such that \begin{equation*}
            \int_{U_\eps\cap\{z_n=\xi\} }|v_{\eps}(z',\xi_\eps)|^2_{\widetilde\omega,h}e^{-\phi_\eps}dV_{\widetilde\omega}\le
\frac{1}{\pi\eps^2}\int_{U_{\eps}} |v_{\eps}|^2_{\widetilde\omega,h} e^{-\phi_\eps} dV_{\widetilde\omega}.
          \end{equation*}
Let $u_{\eps}(z')=v_{\eps}(z',\xi_\eps)/dz_n$, then we have $\dbar u_{\eps}=f$ on $U$ and 
\begin{align}\label{eq bbb}
\int_{U  }|u_\eps|^2_{\omega,h(z',\xi_\eps)}e^{-\phi-\eps|\xi_\eps|^2}dV_{\omega}=&\int_{U  }|u_\eps|^2_{\widetilde\omega,h(z',\xi_\eps)}e^{-\phi-\eps|\xi_\eps|^2}dV_{\widetilde\omega}\nonumber\\ \le &
\frac{1}{\pi\eps^2}\int_{U_{\eps}} \langle B^{-1}_{\omega,\phi_\eps}\widetilde f,\widetilde f \rangle_{\widetilde\omega,h} e^{-\phi_\eps} dV_{\widetilde\omega}. 
\end{align}
Then we  can choose a sequence $\eps_k\to 0$ such that $u_{\eps_k}$  compactly converges to a limit $u$. Then   $\dbar u=f$ and by Fatou's lemma together with \eqref{eq aaa} and \eqref{eq bbb},  we have
\begin{align*}
  \int_U |u|^2_{\omega,h}e^{-\phi}dV_\omega \le &\liminf_{\eps_k\to0} \int_U |u_{\eps_k}|^2_{\omega,h}e^{-\phi-\eps_k|\xi_{\eps_k}|^2}dV_\omega\\
  \le & \liminf_{\eps_k\to0}\frac{1}{\pi\eps_k^2}\int_{U_{\eps_k}} \langle B^{-1}_{\omega,\phi_{\eps_k}}\widetilde f,\widetilde f \rangle_{\widetilde\omega,h} e^{-\phi_{\eps_k}} dV_{\widetilde\omega}\\
  \le & \int_{U }\langle B^{-1}_{\omega,\phi}   f,  f \rangle_{ \omega,h}e^{-\phi}dV_{\omega}.
\end{align*}

\end{proof}

It follows from Example \ref{exa Dr} that  $c_{\DD_r}(0)=\frac{1}{r}$.  Then Theorem \ref{thm optimal l2 ext} implies:

 \begin{corollary}\label{cor optimal l2 extension}
    Let $D\subset\CC$ be a domain, $E$  a trivial holomorphic vector bundle and $h$ a continuous Hermitian metric on $E$.  If $h$ is $L^2$-optimal, then $h$ satisfies the optimal $L^2$-extension condition.
 \end{corollary}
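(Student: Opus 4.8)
The plan is to obtain the optimal $L^2$-extension condition (in the sense of Lemma~\ref{lem DNWZ Grif}) as an immediate specialization of Theorem~\ref{thm optimal l2 ext}, taking the ambient domain in that theorem to be a disc. So, fix $w\in D$, a disc $\DD_r(w)\Subset D$, and a vector $s\in E_w$. First I would observe that the triple $(\DD_r(w),E|_{\DD_r(w)},h)$ is $L^2$-optimal: indeed $\DD_r(w)$ is a Stein open subset of $D$ on which $E$ is trivial, so this is exactly one of the instances guaranteed by the hypothesis that $h$ is $L^2$-optimal on $D$. Since $\DD_r(w)$ is moreover a bounded planar domain and $h|_{\DD_r(w)}$ is continuous, Theorem~\ref{thm optimal l2 ext} is applicable to it.

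Applying Theorem~\ref{thm optimal l2 ext} on $\DD_r(w)$ then yields a holomorphic section $f\in H^0(\DD_r(w),E)$ with $f(w)=s$ and
\[
\int_{\DD_r(w)}|f|^2_h\,d\lambda_1\le\frac{\pi\,|s|^2_{h(w)}}{c_{\DD_r(w)}(w)^2}.
\]
It then remains only to identify the constant. The complex Green function, and hence the logarithmic capacity, is translation invariant, so $c_{\DD_r(w)}(w)=c_{\DD_r}(0)=\tfrac1r$ by Example~\ref{exa Dr}. Substituting $c_{\DD_r(w)}(w)^2=r^{-2}$ rearranges the displayed inequality into
\[
\frac{1}{\pi r^2}\int_{\DD_r(w)}|f|^2_h\,d\lambda_1\le|s|^2_{h(w)},
\]
which is precisely the optimal $L^2$-extension condition, and since $w$ and $\DD_r(w)\Subset D$ were arbitrary, the proof is complete.

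I do not expect a genuine obstacle: the corollary is a direct corollary of Theorem~\ref{thm optimal l2 ext}. The only two points deserving a word of justification are (i) the passage from $L^2$-optimality of $h$ on $D$ to $L^2$-optimality of the triple on the subdisc $\DD_r(w)$, which is immediate from the definition because discs are Stein and $E$ is trivial, and (ii) the elementary fact $c_{\DD_r}(0)=1/r$ together with translation invariance of $c_D$, both recorded in Section~\ref{ss Green function}.
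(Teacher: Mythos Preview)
Your argument is correct and matches the paper's approach: the paper likewise deduces the corollary by applying Theorem~\ref{thm optimal l2 ext} on discs and invoking $c_{\DD_r}(0)=1/r$ from Example~\ref{exa Dr}. Your extra remarks justifying the $L^2$-optimality of the restricted triple and the translation invariance of the logarithmic capacity are fine elaborations of steps the paper leaves implicit.
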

 
By the above lemmas and Corollary \ref{cor optimal l2 extension}, we thus resolve Conjecture \ref{conj DNW-Ina}:
\begin{theorem}[=Theorem \ref{thm DNW-Ina}]\label{thm DNW-Ina'}
  Every continuous $L^2$-optimal Hermitian metric on a holomorphic vector bundle is Griffiths semi-positive.
\end{theorem}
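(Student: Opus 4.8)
The plan is to assemble this from the results established above: both Griffiths semi-positivity and $L^2$-optimality are local notions, and Griffiths semi-positivity can be detected on complex lines, so everything reduces to the planar extension theorem already proved.

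First I would reduce to the case of a trivial bundle over a domain in $\CC^n$: since Griffiths semi-positivity is local and, by definition, $L^2$-optimality is verified on Stein coordinate neighborhoods over which $E$ is trivial, it is enough to show that a continuous $L^2$-optimal Hermitian metric $h$ on a trivial bundle $E$ over a domain $D\subset\CC^n$ is Griffiths semi-positive. Because $h$ is a continuous Hermitian metric, its dual $h^*$ is also a continuous metric, so $\log|u|^2_{h^*}$ is continuous — in particular upper semi-continuous — for every local holomorphic section $u$ of $E^*$. Hence Lemma \ref{lem ch Grif line} applies and it suffices to prove that $h|_{L\cap D}$ is Griffiths semi-positive for every complex line $L\subset\CC^n$.

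Next I would descend to such a line using the restriction property. Fixing $L$ and translating (which affects neither $L^2$-optimality nor Griffiths semi-positivity) so that $0\in L$, I choose a flag of linear subspaces $\CC^n=V_n\supset V_{n-1}\supset\cdots\supset V_1=L$ with $\dim_\CC V_k=k$ and each $V_{k-1}$ a hyperplane of $V_k$. Applying Lemma \ref{pro nak res nak} successively — at stage $k$ to the continuous $L^2$-optimal metric $h|_{V_k\cap D}$ on the (still trivial) bundle $E|_{V_k\cap D}$ over the domain $V_k\cap D\subset V_k\cong\CC^k$ — yields that $h|_{L\cap D}$ is a continuous $L^2$-optimal Hermitian metric on the trivial bundle $E|_{L\cap D}$ over the planar domain $L\cap D$. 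Then Corollary \ref{cor optimal l2 extension} shows that $h|_{L\cap D}$ satisfies the optimal $L^2$-extension condition — concretely, on a disc $\DD_r(w)\Subset L\cap D$ the constant $\pi|s|^2_{h(w)}/c_D(w)^2$ of Theorem \ref{thm optimal l2 ext} becomes $\pi r^2|s|^2_{h(w)}$ since $c_{\DD_r}(0)=1/r$ by Example \ref{exa Dr} — and, $\log|u|^2_{h^*}$ being still continuous on $L\cap D$, Lemma \ref{lem DNWZ Grif} gives that $h|_{L\cap D}$ is Griffiths semi-positive. Running the two reductions backward (first Lemma \ref{lem ch Grif line} over $D$, then localizing over $X$) proves the theorem and resolves Conjecture \ref{conj DNW-Ina}.

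I do not expect a genuine obstacle at this stage: the analytic substance lies entirely in Theorem \ref{thm optimal l2 ext} (already proved) and in \cite[Theorem 1.3]{DNWZ22} ($=$ Lemma \ref{lem DNWZ Grif}). The only points requiring care are bookkeeping in the iterated restriction — that each intermediate bundle stays trivial, each intermediate slice stays a domain of the correct dimension (arguing component-wise if some $V_k\cap D$ is disconnected, which is harmless since Griffiths positivity is pointwise), and that Lemma \ref{pro nak res nak} indeed applies after the translation reducing $L$ to a linear subspace — together with checking that the normalization of the extension constant matches what Lemma \ref{lem DNWZ Grif} demands, which is exactly what Corollary \ref{cor optimal l2 extension} records.
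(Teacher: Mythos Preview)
Your proposal is correct and follows exactly the route the paper takes: iterate Lemma~\ref{pro nak res nak} to descend to a complex line, invoke Corollary~\ref{cor optimal l2 extension} (which packages Theorem~\ref{thm optimal l2 ext} with Example~\ref{exa Dr}) to obtain the optimal $L^2$-extension property there, and then apply Lemma~\ref{lem DNWZ Grif} and Lemma~\ref{lem ch Grif line} to conclude. The paper records this only as ``By the above lemmas and Corollary~\ref{cor optimal l2 extension}'' together with the sketch in the introduction, so your write-up simply makes explicit the bookkeeping (localization, upper semi-continuity of $\log|u|^2_{h^*}$, the translation and flag argument) that the paper leaves to the reader.
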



\begin{thebibliography}{99}

\bibitem{Bern15}
B. Berndtsson: {A Brunn–Minkowski type inequality for Fano manifolds and some uniqueness theorems in K\"ahler geometry},
Invent. math. {\bf 200} (2015), 149--200.

\bibitem{BP08} B. Berndtsson, M. P\u{a}un: Bergman kernels and the pseudoeffectivity of relative canonical bundles. Duke Math. J. 145: 341--378 (2008).

\bibitem{Blocki13} Z. B{\l}ocki: Suita conjecture and the Ohsawa-Takegoshi extension theorem. Invent. Math. 193(1): 149--158 (2013).

\bibitem{Dem-82}
 J.~P.~Demailly: Estimations {$L^2$} pour l'op\'erateur $\bar\partial$ d'un fibr\'e
  vectoriel holomorphe semi-positif au-dessus d'une vari\'et\'e k\"ahl\'erienne
  compl\`ete. Ann. Sci. \'{E}cole Norm. Sup.   15(3): 457--511 (1982).


\bibitem{D12a}
J. P. Demailly: Complex analytic and differential geometry, (2012). Available at
 \url{https://www-fourier.ujf-grenoble.fr/~demailly/manuscripts/agbook.pdf}.

\bibitem{DNW21} F. Deng, J. Ning, Z. Wang: Characterizations of plurisubharmonic functions. Sci. China. Math. 64(9): 1959--1970 (2021).

\bibitem{DNWZ22} F. Deng, J. Ning, Z. Wang, X. Zhou: Positivity of holomorphic vector bundles in terms of $L^p$-estimates for $\overline{\partial}$. Math. Ann. 385: 575--607 (2023).

\bibitem{DWZZ18} F. Deng, Z. Wang, L. Zhang, X. Zhou: New characterizations of plurisubharmonic functions and positivity of direct image sheaves. Amer. J. Math. 146(3): 751--768 (2024).

\bibitem{GuanZhou15} Q. Guan, X. Zhou: A solution of an $L^2$ extension problem with an optimal estimate and applications. Ann. of Math. 181(3): 1139--1208 (2015).

\bibitem{GZZ11} Q. Guan, X. Zhou, L. Zhu: On the Ohsawa-Takegoshi $L^2$ extension theorem and the twisted Bochner-Kodaira identity. C. R. Math. Acad. Sci. Paris 349: 797--800 (2011).

\bibitem{Hor65} L. H\"ormander: $L^2$ estimates and existence theorems for the $\overline{\partial}$ operator. Acta Math. 113: 89--152 (1965).

\bibitem{Ina-AG} T. Inayama: Nakano positivity of singular Hermitian metrics and vanishing theorems of Demailly-Nadel-Nakano type. Algebr. Geom. 9(1): 69--92 (2022).

\bibitem{LXYZ24}
Z. Liu, B. Xiao, H. Yang, X. Zhou: Multiplier submodule sheaves and a problem of Lempert. (2024) \url{https://arxiv.org/abs/2111.13452v2}.

\bibitem{OT87} T. Ohsawa, K. Takegoshi: On the extension of $L^2$ holomorphic functions. Math. Z. 195(2): 197--204
(1987).

\bibitem{PT18} M. P\u{a}un, S. Takayama: Positivity of twisted relative pluricanonical bundles and their direct images. J. Algebraic Geom. 27: 211--272 (2018).

\bibitem{Raufi} H. Raufi: Singular hermitian metrics on holomorphic vector bundles. Ark. Mat. 53(2): 359--382 (2015).

\bibitem{Richberg} R. Richberg: Stetige streng pseudokonvexe funktionen. Math. Ann. 175(4): 257--286 (1967).


\end{thebibliography}
\end{document}